\documentclass[12pt,twoside]{amsart}
\usepackage{amsmath}
\usepackage{amsthm}
\usepackage{amsfonts}
\usepackage{amssymb}
\usepackage{latexsym}
\usepackage[all]{xy}
\usepackage{extarrows}

\date{}
\allowdisplaybreaks[4] \footskip=15pt
\renewcommand{\uppercasenonmath}[1]{}

\numberwithin{equation}{section} \theoremstyle{plain}
\newtheorem*{thm*}{Main Theorem}
\newtheorem{thm}{Theorem}[section]
\newtheorem{cor}[thm]{Corollary}
\newtheorem*{cor*}{Corollary}
\newtheorem{lem}[thm]{Lemma}
\newtheorem*{lem*}{Lemma}

\newtheorem*{prop*}{Proposition}
\newtheorem{rem}[thm]{Remark}
\newtheorem*{rem*}{Remark}

\newtheorem*{exa*}{Example}
\newtheorem{df}[thm]{Definition}
\newtheorem*{df*}{Definition}

\newtheorem*{conj*}{Conjecture}
\newtheorem*{ack*}{ACKNOWLEDGEMENTS}

\newcommand{\pf}{\noindent\begin {proof}}
\newcommand{\epf}{\end{proof}}

\newcommand{\Ext}{\mbox{\rm Ext}}

\newcommand{\Hom}{\mbox{\rm Hom}}
\newcommand{\Tor}{\mbox{\rm Tor}}
\newcommand{\im}{\mbox{\rm im}}
\newcommand{\coker}{\mbox{\rm coker}}
\begin{document}
\begin{center}
{\bf  Gorenstein homological properties of $n$-trivial extensions of rings}

\vspace{0.5cm} Lixin Mao\\
School of Mathematics and Physics, Nanjing Institute of Technology,\\ Nanjing 211167, China\\
E-mail: maolx2@hotmail.com \\
\end{center}

\bigskip
\centerline { \bf  Abstract}
 \bigskip
\leftskip10truemm \rightskip10truemm
 \noindent
We explicitly investigate  Gorenstein projective, injective and flat modules over the $n$-trivial extension $R\ltimes_{n}M$ of a ring $R$ by an $R$-bimodule $M$. Assume that  $fd(M^{\otimes_{R}i}_{R})<\infty$ and $pd(_{R}M^{\otimes_{R}i})<\infty$ for any $1\leq i\leq n$, $fd(\textbf{Z}(R)_{R\ltimes_{n}M})<\infty$ and $pd(_{R\ltimes_{n}M}\textbf{Z}(R))<\infty$. It is proven that a left $R\ltimes_{n}M$-module $(X,f)$ is Gorenstein projective if and only if the sequence  $M^{\otimes_{R}n+1}\otimes_{R} X\stackrel{(M\otimes f) \cdots(M^{\otimes_{R}n}\otimes f)}\longrightarrow  M\otimes_{R} X\stackrel{f}\longrightarrow X$   is exact and coker($f$) is a Gorenstein projective left $R$-module. As a consequence, we characterize Gorenstein projective, injective and flat modules over tensor rings.\\
\vbox to 0.3cm{}\\
{\it Key Words:} $n$-trivial extension; Gorenstein projective module; Gorenstein injective module; Gorenstein flat module.\\
{\it 2020 Mathematics Subject Classification:} 16D40; 16D50; 16D90.

\leftskip0truemm \rightskip0truemm
\bigskip
\section {\bf Introduction}

The origin of Gorenstein homological algebra may date back to 1960s when Auslander and Bridger introduced the concept of G-dimension for
finitely generated modules over a two-sided Noetherian ring \cite{AB}. Later, Enochs, Jenda and Torrecillas extended the
ideas of Auslander and Bridger and introduced the concepts of Gorenstein projective, injective and flat modules over arbitrary
rings \cite{EJ1}-\cite{EJT}, which have significant applications in representation theory of algebras, algebraic geometry and other
fields.

Let $R$ be an associative ring and $M$ an $R$-bimodule. Then the Cartesian product $R \times M$, with the natural addition and the multiplication given by $(r_{1}, m_{1})(r_{2}, m_{2}) = (r_{1}r_{2}, r_{1}m_{2}+m_{1}r_{2})$, becomes a ring. This ring is called the \emph{trivial  extension} of the ring $R$ by the $R$-bimodule $M$, and denoted
by $R\ltimes M$. One important special case of the trivial extension is the formal triangular matrix ring. The notion of trivial extension of a ring by a bimodule is an important extension of rings and has played a crucial role in ring theory and homological algebra \cite{DMZ, FGR, PR, R}. In particular, Gorenstein projective, injective and flat
modules over  formal triangular matrix rings and trivial ring extensions  have been investigated in many papers  \cite{EIT, HJ, M, Z}.

In \cite{ABFS}, Anderson, Bennis, Fahid and  Shaiea introduced the concept of $n$-trivial extension of a ring $R$ by a family $(M_{i})_{i=1}^{n}$ of $n$ $R$-bimodules, denoted by $R\ltimes_{n} M_{1}\ltimes \cdots\ltimes M_{n}$ and  obtained many ring-theoretic properties of this kind of rings. Later, Benkhadra, Bennis and Garc\'{\i}a Rozas studied some basic module-theoretic properties of $R\ltimes_{n} M_{1}\ltimes \cdots\ltimes M_{n}$ in \cite{BBG}. Recently, the author focused on a salient and tractable  subclass, i.e., the $n$-trivial extension $R\ltimes_{n} M$ of a ring $R$ by an $R$-bimodule $M$ in \cite{M1}, which is not only a generalization of  the classical trivial extension  $R\ltimes M$, but also a quotient ring of the tensor ring $T_{R}(M)$ by the ideal generated by $M^{\otimes_{R}n+1}$ in the sense of \cite{Co}. Many interesting homological properties of the ring $R\ltimes_{n} M$ were obtained. In the present paper, we will further investigate the transfers of Gorenstein properties between an arbitrary ring $R$ and its $n$-trivial ring extension $R\ltimes_{n} M$. As a consequence, we give characterizations of Gorenstein projective, injective and flat modules over tensor rings, which extend some related results in \cite{DLST,TW}.

The contents of this paper are arranged as follows.

In Section 2, we recall some basic concepts and results on $n$-trivial extensions.

In Section 3,  we describe Gorenstein projective and injective $R\ltimes_{n} M$-modules. For example, assume that $fd(M^{\otimes_{R}i}_{R})<\infty$ and $pd(_{R}M^{\otimes_{R}i})<\infty$ for any $1\leq i\leq n$, $fd(\textbf{Z}(R)_{R\ltimes_{n}M})<\infty$ and $pd(_{R\ltimes_{n}M}\textbf{Z}(R))<\infty$. It is proven that $(X,f)$ is a Gorenstein projective left $R\ltimes_{n}M$-module if and only if the sequence  $M^{\otimes_{R}n+1}\otimes_{R} X\stackrel{(M\otimes f) \cdots(M^{\otimes_{R}n}\otimes f)}\longrightarrow  M\otimes_{R} X\stackrel{f}\longrightarrow X$   is exact and $\coker(f)$ is a Gorenstein projective left $R$-module (see Theorems \ref{thm: 3.1} and \ref{thm: 3.2}).

Section 4 is devoted to Gorenstein flat and $PGF$ $R\ltimes_{n} M$-modules. For example, assume that $fd(_{R}M^{\otimes_{R}i})<\infty$, $fd(M^{\otimes_{R}i}_{R})<\infty$ for any $1\leq i\leq n$, $fd(_{R\ltimes_{n}M}\textbf{Z}(R))<\infty$, $fd(\textbf{Z}(R)_{R\ltimes_{n}M})$ or $id(\textbf{Z}(R)_{R\ltimes_{n}M})<\infty$. We prove that $(X,f)$ is a $PGF$ left $R\ltimes_{n}M$-module if and only if the sequence  $M^{\otimes_{R}n+1}\otimes_{R} X\stackrel{(M\otimes f) \cdots(M^{\otimes_{R}n}\otimes f)}\longrightarrow  M\otimes_{R} X\stackrel{f}\longrightarrow X$   is exact and $\coker(f)$ is a $PGF$ left $R$-module (see Theorems \ref{thm: 4.4} and \ref{thm: 4.5}).
\bigskip
\section {\bf Preliminaries}
\bigskip
Throughout this paper, all rings are associative rings with identity and all modules are unitary. For a ring $R$, we write $R$-Mod (resp. Mod-$R)$  for the category of left (resp. right) $R$-modules. $_RX$ (resp. $X_{R}$) denotes a left (resp. right) $R$-module, $pd(_{R}X)$, $id(_{R}X)$ and $fd(_{R}X)$ denote the projective, injective and flat dimensions of $_{R}X$ respectively. The character module $\Hom_{\mathbb{Z}}(X,\mathbb{Q}/\mathbb{Z})$ of $X$  is denoted by $X^{+}$.

We first recall the concept of an $n$-trivial extension $R\ltimes_{n} M$ of a ring $R$ by an $R$-bimodule $M$ \cite{M1}.
\begin{df}\label{df: 2.1} {\rm Let $M$ be an $R$-bimodule and $n\geq 1$. Define a multiplication in the direct sum $\coprod_{i=0}^{n}M^{\otimes_{R}i}$ of Abelian groups,
where $M^{\otimes_{R}0} = R$, $M^{\otimes_{R}1}=M$ and $M^{\otimes_{R}i+1} =  M\otimes_{R}M^{\otimes_{R}i}$, by
$(m_{0}, ..., m_{n})(m_{0}^{'}, ..., m_{n}^{'}) = (\sum_{i+j=k} m_{i}\otimes m_{j}^{'})_{0\leq k\leq n}$. Then $\coprod_{i=0}^{n}M^{\otimes_{R}i}$ becomes a ring. This ring is said to be an \emph{$n$-trivial extension} of the ring $R$ by $M$, denoted by $R\ltimes_{n} M$.}
\end{df}
Notice that $R\ltimes_{1} M$ is nothing but the classical trivial extension $R\ltimes M$.

In order to investigate modules over $R\ltimes_{n} M$, the author introduced the following category $\Omega$ in \cite{M1}: whose objects are couples $(X, f)$ with $X$ a left $R$-module and $f\in\Hom_{R}(M\otimes_{R}X, X)$ such that the composition of $$M^{\otimes_{R}n+1}\otimes_{R} X\stackrel{M^{\otimes_{R}n}\otimes f}\longrightarrow M^{\otimes_{R}n}\otimes_{R} X\longrightarrow\cdots\longrightarrow  M\otimes_{R}  M\otimes_{R} X\stackrel{ M\otimes f}\longrightarrow  M\otimes_{R} X\stackrel{f}\longrightarrow X$$ is zero and a morphism $(X,f) \rightarrow (Y,g)$ is  $\gamma\in\Hom_{R}(X, Y)$ such that the following diagram commutes. $$\xymatrix{M\otimes_{R}X\ar[d]_{f}\ar[r]^{M\otimes\gamma}&M\otimes_{R}Y\ar[d]_{g}\\X\ar[r]^{\gamma}&Y}$$
A sequence $(X_{1},f_{1})\stackrel{\gamma_{1}}\longrightarrow (X_{2},f_{2})\stackrel{\gamma_{2}}\longrightarrow(X_{3},f_{3})$ in $\Omega$ is exact if and only if the underlying sequence $X_{1}\stackrel{\gamma_{1}}\longrightarrow X_{2}\stackrel{\gamma_{2}}\longrightarrow X_{3}$ in $R$-Mod is exact.

In view of the adjointness relation, the category $\Omega$ is equivalent  to the category $\Upsilon$: whose objects are couples $[Y,g]$ with $Y$ a left $R$-module and $g\in\Hom_{R}(Y, \Hom_{R}(M,Y))$ such that the composition of {\Small $$Y\stackrel{g}\longrightarrow \Hom_{R}(M,Y)\stackrel{{\rm Hom}_{R}(M,g)}\longrightarrow \Hom_{R}(M,\Hom_{R}(M,Y))\rightarrow\cdots\rightarrow\Hom_{R}^{n}(M,Y)\stackrel{{\rm Hom}_{R}^{n}(M,g)}\longrightarrow  \Hom_{R}^{n+1}(M,Y)$$} is zero, where $\Hom_{R}^{0}(M,Y)=Y, \Hom_{R}^{1}(M,Y)=\Hom_{R}(M,Y), \cdots,
\Hom_{R}^{n}(M,Y)=\Hom_{R}(M,\Hom_{R}^{n-1}(M,Y))$, and a morphism $[X,f] \rightarrow [Y,g]$ is  $\varphi\in\Hom_{R}(X, Y)$ such that the following diagram commutes. $$\xymatrix{X\ar[d]_{f}\ar[rr]^{\varphi}&&Y\ar[d]_{g}\\\Hom_{R}(M,X)\ar[rr]^{{\rm Hom}_{R}(M,\varphi)}&&\Hom_{R}(M,Y)}$$
A sequence $[Y_{1},g_{1}]\stackrel{\varphi_{1}}\longrightarrow [Y_{2},g_{2}]\stackrel{\varphi_{2}}\longrightarrow [Y_{3},g_{3}]$ in $\Upsilon$ is exact if and only if the underlying sequence $Y_{1}\stackrel{\varphi_{1}}\longrightarrow Y_{2}\stackrel{\varphi_{2}}\longrightarrow Y_{3}$ in $R$-Mod is exact.

There are some important functors as follows.

The functor $\textbf{T}: R$-Mod $\longrightarrow \Omega$ is given, for every object $X\in R$-Mod, by $\textbf{T}(X) =(\coprod_{i=0}^{n}(M^{\otimes_{R}i}\otimes_{R}X), \mu)$, with $\mu(x_{0}, x_{1}, x_{2},\cdots, x_{n})=(0, x_{0}, x_{1},\cdots, x_{n-1})$  and for morphisms by $\textbf{T}(\alpha)=\coprod_{i=0}^{n}(M^{\otimes_{R}i}\otimes\alpha)$.

The functor $\textbf{U}:\Omega\longrightarrow R$-Mod is given, for every object $(X,f)\in \Omega$, by $\textbf{U}(X,f) =X$ and for morphisms by $\textbf{U}(\alpha)=\alpha$. Since the category $\Upsilon$ is  equivalent  to $\Omega$, $\textbf{U}$ can also be interpreted as the functor $\textbf{U}:\Upsilon\longrightarrow R$-Mod: for every object $[Y,g]\in \Upsilon$, $\textbf{U}[Y,g] =Y$ and for morphisms, $\textbf{U}(\beta)=\beta$.

The functor $\textbf{Z}: R$-Mod $\longrightarrow \Omega$ is given, for every object $X\in R$-Mod, by $\textbf{Z}(X)=(X,0)$ and for morphisms by $\textbf{Z}(\alpha)=\alpha$. Since the category $\Upsilon$ is  equivalent  to $\Omega$, $\textbf{Z}$ can also be interpreted as the functor $\textbf{Z}: R$-Mod $\longrightarrow\Upsilon$: for every object  $Y\in R$-Mod, $\textbf{Z}(Y)=[Y,0]$ and for morphisms, $\textbf{Z}(\beta)=\beta$.

The functor $\textbf{C}:\Omega\longrightarrow R$-Mod is given, for every object $(X,f)\in \Omega$, by $\textbf{C}(X,f) =\coker(f)$ and for morphisms by $\textbf{C}(\alpha)=$ the induced morphism.

The functor $\textbf{H}: R$-Mod $\longrightarrow \Upsilon$ is given, for every object  $Y\in R$-Mod, by $\textbf{H}(Y)=[\coprod_{i=0}^{n}\Hom_{R}^{i}(M,Y),\nu]$,  with $\nu(x_{0}, x_{1}, x_{2},\cdots, x_{n})=(x_{1}, x_{2},\cdots, x_{n},0)$ and for morphisms by $\textbf{H}(\beta)=\coprod_{i=0}^{n}\Hom^{i}_{R}(M,\beta)$.

The functor $\textbf{K}: \Upsilon\longrightarrow R$-Mod is given, for every object $[Y,g]\in \Upsilon$, by $\textbf{K}[Y,g] =\ker(g)$ and for morphisms by $\textbf{K}(\beta)=$ the induced morphism.

There are analogous functors for right modules.

By \cite[Proposition 2.3]{M1}, $(\textbf{T}, \textbf{U})$, $(\textbf{U}, \textbf{H})$, $(\textbf{C}, \textbf{Z})$ and $(\textbf{Z}, \textbf{K})$ are adjoint pairs  such that $\textbf{C} \textbf{T}= id_{R{\rm-Mod}}$, $\textbf{U} \textbf{Z}= id_{R{\rm-Mod}}$ and $\textbf{K} \textbf{H}= id_{R{\rm-Mod}}$. Consequently, $\textbf{C}$ and $\textbf{T}$ are right exact, $\textbf{K}$ and $\textbf{H}$ are left exact.

By \cite[Proposition 2.4]{M1}, the category $R\ltimes_{n}M$-Mod is equivalent to the categories $\Omega$ and $\Upsilon$. In the rest of the paper, we will identify  $R\ltimes_{n}M$-Mod  with $\Omega$ and $\Upsilon$.
\bigskip
\section {\bf Gorenstein projective and injective modules over $n$-trivial extensions}
\bigskip
Recall that a  left $R$-module $X$ is \emph{Gorenstein projective} \cite{EJ1} if there is an exact sequence of projective left $R$-modules $$\Xi: \cdots\longrightarrow
P^{-1}\longrightarrow P^{0}\stackrel{g^{0}}\longrightarrow P^{1}\longrightarrow P^{2}\longrightarrow \cdots$$  such that $X\cong\ker(g^{0})$ and $\Hom_{R}(\Xi, Q)$ is also exact for any projective left $R$-module $Q$.

We first  establish  necessary conditions for Gorenstein projective modules over $n$-trivial extensions.
\begin{thm} \label{thm: 3.1}Assume that   $fd(\textbf{Z}(R)_{R\ltimes_{n}M})<\infty$ and $pd(_{R\ltimes_{n}M}\textbf{Z}(R))<\infty$. If $(X,f)$ is a Gorenstein projective left $R\ltimes_{n}M$-module, then the sequence  $M^{\otimes_{R}n+1}\otimes_{R} X\stackrel{(M\otimes f) \cdots(M^{\otimes_{R}n}\otimes f)}\longrightarrow  M\otimes_{R} X\stackrel{f}\longrightarrow X$   is exact and $\coker(f)$ is a Gorenstein projective left $R$-module.
\end{thm}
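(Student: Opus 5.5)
Let $A=R\ltimes_{n}M$. The plan is to compute $\textbf{Z}(R)\otimes_{A}-$ and its derived functors on $\Omega$, and then use the two finiteness hypotheses to show that a Gorenstein projective $A$-module has vanishing higher $\Tor^{A}_{i}(\textbf{Z}(R),-)$ and that $\textbf{Z}(R)\otimes_{A}-$ carries a totally acyclic complex of projectives to one over $R$.

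The first step is to identify the functor. For a left $A$-module $(X,f)$ we have $\textbf{Z}(R)\otimes_{A}(X,f)\cong \coker(f)=\textbf{C}(X,f)$, since $\textbf{Z}(R)=A/I$ with $I=\coprod_{i\ge1}M^{\otimes_R i}$ and $IX=\im(f)$. Projective $A$-modules are summands of $\textbf{T}(P)$ with $P$ projective over $R$, and $\textbf{C}\textbf{T}=id$, so $\textbf{C}$ sends projectives to projectives.

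Next we compute $\Tor$ via a standard resolution of $\textbf{Z}(R)_{A}$. Let $\phi_k=(M^{\otimes_R k}\otimes f)$ up to the obvious composites. There is a periodic-type projective resolution of the right module $\textbf{Z}(R)$ by modules $\textbf{T}(M^{\otimes j})$-like pieces:
$$\cdots\to M^{\otimes n+1}\otimes_R A\to M\otimes_R A\to A\to \textbf{Z}(R)\to0,$$
alternating between multiplication by $M$ and by $M^{\otimes n}$. After tensoring with $(X,f)$ this gives a complex
$$\cdots\to M^{\otimes n+2}\otimes X\to M^{\otimes n+1}\otimes X\xrightarrow{\;\phi\;} M\otimes X\xrightarrow{f}X.$$
Hence $\Tor_1^{A}(\textbf{Z}(R),(X,f))$ is the homology at $M\otimes X$, namely $\ker f/\im(\text{composite})$. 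Here I would be careful that the terms $M^{\otimes j}\otimes A$ may not be flat when $M$ is not flat; in that case I would instead verify directly, by a short exact sequence argument in $\Omega$, that $\Tor_1$ equals this homology. This identification is the main obstacle.

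For the Gorenstein projective $(X,f)$, take a totally acyclic complex $\Xi$ of projective $A$-modules with $(X,f)$ a syzygy. Since $fd(\textbf{Z}(R)_A)<\infty$, the standard argument (dimension shifting along $\Xi$, using that Gorenstein projectives are arbitrarily high syzygies) gives $\Tor_{i}^{A}(\textbf{Z}(R),G)=0$ for all $i\ge1$ and every syzygy $G$ of $\Xi$. The case $i=1$ yields exactness of $M^{\otimes n+1}\otimes X\to M\otimes X\to X$. Moreover $\textbf{Z}(R)\otimes_A\Xi=\textbf{C}(\Xi)$ is then an exact complex of projective $R$-modules with $\coker(f)$ as a syzygy.

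It remains to check $\Hom_R(\textbf{C}(\Xi),Q)$ is exact for projective $Q$. By the adjunction $(\textbf{C},\textbf{Z})$, this equals $\Hom_A(\Xi,\textbf{Z}(Q))$. Since $pd(_A\textbf{Z}(R))<\infty$ and $\textbf{Z}$ preserves direct sums and summands, $pd(_A\textbf{Z}(Q))\le pd(_A\textbf{Z}(R))<\infty$. Total acyclicity of $\Xi$ extends from projectives to modules of finite projective dimension, by induction on the length of a projective resolution together with the long exact sequence of complexes. Therefore $\Hom_A(\Xi,\textbf{Z}(Q))$ is exact, and $\coker(f)$ is Gorenstein projective.
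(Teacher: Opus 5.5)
Your proposal is correct and follows the paper's proof essentially step for step. You tensor a totally acyclic complex of projectives with $\textbf{Z}(R)$, which stays exact because $fd(\textbf{Z}(R)_{R\ltimes_{n}M})<\infty$, and obtain an exact complex of projective $R$-modules with $\coker(f)$ as a syzygy. You then use the $(\textbf{C},\textbf{Z})$ adjunction together with $pd(\textbf{Z}(Q))<\infty$ to get $\Hom$-exactness, and read off the exact sequence from $\Tor_{1}^{R\ltimes_{n}M}(\textbf{Z}(R),(X,f))=0$.

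The identification of $\Tor_{1}$ that you flag as the main obstacle is what the paper takes from [M1, Theorem 4.4]. It needs no flatness of the later terms: since $A$ is projective, $\Tor_{1}^{A}(\textbf{Z}(R),(X,f))=\ker(I\otimes_{A}X\to X)$. Right exactness applied to the presentation $M^{\otimes_{R}n+1}\otimes_{R}A\to M\otimes_{R}A\to I\to 0$ then gives $I\otimes_{A}X\cong\coker\bigl((M\otimes f)\cdots(M^{\otimes_{R}n}\otimes f)\bigr)$, with the map to $X$ induced by $f$.
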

\begin{proof}There is an exact sequence of projective left  $R\ltimes_{n}M$-modules $$\Delta: \cdots\longrightarrow \textbf{T}(P^{-1})\longrightarrow
\textbf{T}(P^{0})\stackrel{\varphi^{0}}\longrightarrow \textbf{T}(P^{1})\longrightarrow \textbf{T}(P^{2})\longrightarrow \cdots$$
such that $(X,f)\cong\ker(\varphi^{0})$, each $P^{i}$ is a projective left $R$-module by \cite[Theorem 2.8]{M1} and $\Hom_{R\ltimes_{n}M}(\Delta, Q)$ is  exact for any projective left $R\ltimes_{n}M$-module $Q$.

Since $fd(\textbf{Z}(R)_{R\ltimes_{n}M})<\infty$, $\textbf{Z}(R)\otimes_{R\ltimes_{n}M}\Delta$ is exact by \cite[Lemma 2.3]{EIT}. Since $\textbf{Z}(R)\otimes_{R\ltimes_{n}M} \textbf{T}(P^{i})\cong P^{i}$, one has the exact sequence of projective left $R$-modules
$$\textbf{C}(\Delta): \cdots\longrightarrow P^{-1}\longrightarrow P^{0}\stackrel{\textbf{C}(\varphi^{0})}\longrightarrow P^{1}\longrightarrow P^{2}\longrightarrow \cdots$$ with
$\coker(f)\cong\ker(\textbf{C}(\varphi^{0}))$.

Let $G$ be a projective left $R$-module. Then $\textbf{Z}(G)\cong$ Add$(\textbf{Z}(R))$. Since $pd(_{R\ltimes_{n}M}\textbf{Z}(R))<\infty$, we have $\Hom_{R}(\textbf{C}(\Delta),G)\cong\Hom_{R\ltimes_{n}M}(\Delta, \textbf{Z}(G))$ is exact by \cite[Proposition 2.3]{H}. So $\coker(f)$ is a Gorenstein projective left $R$-module.

Since $\textbf{Z}(R)\otimes_{R\ltimes_{n}M}\Delta$ is exact, one has $\Tor^{R\ltimes_{n}M}_{1}(\textbf{Z}(R), (X,f))=0$. Thus the sequence  $M^{\otimes_{R}n+1}\otimes_{R} X\stackrel{(M\otimes f) \cdots(M^{\otimes_{R}n}\otimes f)}\longrightarrow  M\otimes_{R} X\stackrel{f}\longrightarrow X$  is exact by \cite[Theorem 4.4]{M1}.
\end{proof}
Next, we  establish  sufficient  conditions for Gorenstein projective modules over $n$-trivial extensions.
\begin{thm} \label{thm: 3.2}Assume that   $fd(M^{\otimes_{R}i}_{R})<\infty$ and $pd(_{R}M^{\otimes_{R}i})<\infty$ for any $1\leq i\leq n$. If $(X,f)$ is a left $R\ltimes_{n}M$-module such that the sequence $M^{\otimes_{R}n+1}\otimes_{R} X\stackrel{(M\otimes f) \cdots(M^{\otimes_{R}n}\otimes f)}\longrightarrow  M\otimes_{R} X\stackrel{f}\longrightarrow X$  is exact and $\coker(f)$ is a Gorenstein projective left $R$-module, then  $(X,f)$ is a Gorenstein projective left $R\ltimes_{n}M$-module.
\end{thm}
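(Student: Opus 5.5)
The plan is to reduce everything to the ``free'' objects $\textbf{T}(-)$ and to use the adjunction $(\textbf{T},\textbf{U})$ together with the exactness hypothesis on $(X,f)$. Put $C=\coker(f)$. By \cite[Theorem 4.4]{M1}, exactness of $M^{\otimes_{R}n+1}\otimes_{R} X\to M\otimes_{R} X\stackrel{f}\to X$ is equivalent to $\Tor_{1}^{R\ltimes_{n}M}(\textbf{Z}(R),(X,f))=0$. I will use it in this form, so that applying $\textbf{C}=\textbf{Z}(R)\otimes_{R\ltimes_{n}M}-$ to a short exact sequence ending in $(X,f)$ keeps it exact.

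\emph{Step 1: $\textbf{T}$ preserves totally acyclic complexes.} Let $\Xi:\cdots\to P^{-1}\to P^{0}\stackrel{g^{0}}\to P^{1}\to\cdots$ be a complete projective resolution of $C$. Then $\textbf{T}(\Xi)$ is a complex of projective $R\ltimes_{n}M$-modules, and its underlying complex is $\coprod_{i=0}^{n}M^{\otimes_{R}i}\otimes_{R}\Xi$. Each summand is exact by \cite[Lemma 2.3]{EIT}, since $fd(M^{\otimes_{R}i}_{R})<\infty$.

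For a projective $Q$, which is a summand of some $\textbf{T}(P')$ by \cite[Theorem 2.8]{M1}, adjunction gives
$$\Hom_{R\ltimes_{n}M}(\textbf{T}(\Xi),\textbf{T}(P'))\cong\Hom_{R}\Big(\Xi,\coprod_{i=0}^{n}M^{\otimes_{R}i}\otimes_{R}P'\Big).$$
The module $\coprod_{i=0}^{n}M^{\otimes_{R}i}\otimes_{R}P'$ has finite projective dimension because $pd(_{R}M^{\otimes_{R}i})<\infty$. So this complex is exact by \cite[Proposition 2.3]{H}, and $\textbf{T}(\Xi)$ is totally acyclic. The same argument shows that $\textbf{T}(G)$ is Gorenstein projective for every Gorenstein projective $R$-module $G$. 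It also gives $\Ext^{k}_{R\ltimes_{n}M}(\textbf{T}(G),Q)\cong\Ext^{k}_{R}(G,\textbf{U}Q)=0$ for $k\geq1$, using that $\textbf{T}$ is exact on modules $G$ with $\Tor^{R}_{\geq1}(M^{\otimes_{R}i},G)=0$, which holds for Gorenstein projective $G$ by the same lemma.

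\emph{Step 2: reduce $(X,f)$ to $\textbf{T}$-objects.} Choose an epimorphism $P_{0}\to X$ with $P_{0}$ projective. By adjunction it gives an epimorphism $\textbf{T}(P_{0})\to(X,f)$ with kernel $(X_{1},f_{1})$. Since $\Tor_{1}(\textbf{Z}(R),(X,f))=0$, applying $\textbf{C}$ yields a short exact sequence $0\to\textbf{C}(X_{1},f_{1})\to P_{0}\to C\to0$. Hence $\textbf{C}(X_{1},f_{1})$ is again Gorenstein projective, and $(X_{1},f_{1})$ again satisfies the exactness hypothesis via the long $\Tor$ sequence. Iterating gives a projective resolution $\cdots\to\textbf{T}(P_{1})\to\textbf{T}(P_{0})\to(X,f)\to0$ whose image under $\textbf{C}$ is a projective resolution of $C$. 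The hypothesis $pd(_{R}M^{\otimes_{R}i})<\infty$ then lets me show, by dimension shifting and adjunction, that $\Ext^{k}_{R\ltimes_{n}M}((X,f),Q)=0$ for all $k\geq1$ and all projective $Q$.

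\emph{Step 3: finite Gorenstein projective dimension.} I expect this to be the main obstacle. I want to show $Gpd((X,f))<\infty$ and then conclude by \cite[Theorem 2.20]{H}, since Step 2 supplies the $\Ext$-vanishing. The route I would try first compares $(X,f)$ with $\textbf{T}(C)$ through the natural map $\textbf{T}(X)\to(X,f)$ and the filtration of $X$ by the images of the iterated maps $M^{\otimes_{R}j}\otimes X\to X$. The exactness hypothesis should make the successive layers of this filtration quotients of $M^{\otimes_{R}j}\otimes_{R}C$, sitting inside the $\textbf{Z}$-image. These have finite Gorenstein projective dimension over $R\ltimes_{n}M$, using Step 1 and the finiteness of $fd(M^{\otimes_{R}i}_{R})$ and $pd(_{R}M^{\otimes_{R}i})$.

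If this becomes unwieldy, the fallback is to build the right half of a complete resolution directly. I would embed $(X,f)$ into some $\textbf{T}(P)$ with a cokernel of the same type, starting from a monomorphism $C\to P$ with Gorenstein projective cokernel coming from $\Xi$. The map would be obtained by lifting along $\textbf{T}(C)$ using the exactness hypothesis. Checking that this embedding exists and that its cokernel again has exact defining sequence is where I expect the real work to lie.
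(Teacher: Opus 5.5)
Your proposal has a genuine gap. Everything rests on Step 3, which you leave open, and the route you put first cannot work under the hypotheses of Theorem \ref{thm: 3.2}.

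\emph{The first route in Step 3 fails.} The layers of the filtration of $X$ by the images of the iterated maps $M^{\otimes_{R}j}\otimes_{R}X\to X$ are $\textbf{Z}$-objects. Under the hypotheses of Theorem \ref{thm: 3.2} alone, $\textbf{Z}$-objects need not have finite Gorenstein projective dimension. For example, take $R=k$ a field, $M=k^{2}$ and $n=1$. Then $R\ltimes M\cong k[x,y]/(x,y)^{2}$, both finiteness hypotheses hold trivially, and the free module $\textbf{T}(k)$ has layers $\textbf{Z}(k)$ and $\textbf{Z}(k^{2})$. But the simple module has infinite Gorenstein projective dimension: this local ring has $2$-dimensional socle, so it is not Gorenstein, and a local ring is Gorenstein if and only if its residue field has finite G-dimension. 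Trying to get finiteness for $\textbf{Z}(G)$ from Step 1 via $0\to(\coprod_{i=1}^{n}M^{\otimes_{R}i}\otimes_{R}G,w_{1})\to\textbf{T}(G)\to\textbf{Z}(G)\to0$ is circular.

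\emph{Step 2 has a smaller hole.} The long $\Tor$ sequence only gives $\Tor_{1}(\textbf{Z}(R),(X_{1},f_{1}))\cong\Tor_{2}(\textbf{Z}(R),(X,f))$, whereas the hypothesis only kills $\Tor_{1}$. The higher vanishing is true here, but proving it needs $\Tor^{R}_{\geq1}(M^{\otimes_{R}i},C)=0$ and a resolution built compatibly with the filtration of $X$, not an arbitrary $P_{0}\to X$.

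\emph{The fallback is not yet meaningful.} ``Lifting along $\textbf{T}(C)$'' has no content as stated. In general $(X,f)$ is a non-split iterated extension and has no natural comparison map with $\textbf{T}(C)$.

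The missing idea is the one the paper uses. Put $K_{j}=\coker\bigl(f(M\otimes f)\cdots(M^{\otimes_{R}j-1}\otimes f)\bigr)$, so $K_{1}=C$ and $K_{n+1}=X$. By \cite[Lemma 2.7(1)]{M1}, which is exactly where the exactness hypothesis enters, there are short exact sequences $0\to M\otimes_{R}K_{j-1}\stackrel{\alpha_{j}}\longrightarrow K_{j}\to C\to0$ for $2\leq j\leq n+1$. Since $pd(_{R}M^{\otimes_{R}i}\otimes_{R}P^{0})<\infty$ and $\Xi$ is totally acyclic, $\Ext^{1}_{R}(C,M^{\otimes_{R}i}\otimes_{R}P^{0})=0$. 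This lets you extend $C\hookrightarrow P^{0}$ step by step to maps $\psi^{i}_{j}:K_{j}\to M^{\otimes_{R}i}\otimes_{R}P^{0}$ satisfying $\psi^{j-1}_{j}\alpha_{j}=M\otimes\psi^{j-2}_{j-1}$; the lower components are obtained by composing with the projections $K_{j}\to K_{i}$. These relations are exactly what make $(\psi^{0}_{n+1},\dots,\psi^{n}_{n+1}):X\to\textbf{T}(P^{0})$ a morphism of $R\ltimes_{n}M$-modules.

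The generalized Horseshoe Lemma applies because each $M^{\otimes_{R}i}\otimes_{R}\Xi$ is exact, and it produces both halves at once. The result is a complex $\Delta$ of modules $\textbf{T}(P^{k})$ with lower-triangular differentials whose $(0,0)$-entries are $g^{k}$, with $(X,f)$ the kernel in degree $0$ and $\textbf{C}(\Delta)=\Xi$. This also gives the higher $\Tor$ vanishing for free.

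Total acyclicity is then checked by filtering $\textbf{T}(Q)$ with layers $\textbf{Z}(M^{\otimes_{R}i}\otimes_{R}Q)$. One uses $\Hom_{R\ltimes_{n}M}(\Delta,\textbf{Z}(N))\cong\Hom_{R}(\Xi,N)$, which comes from the $(\textbf{C},\textbf{Z})$ adjunction. The $(\textbf{T},\textbf{U})$ adjunction does not apply directly, since these differentials are not of the form $\textbf{T}(g^{k})$. No finiteness of Gorenstein projective dimension and no appeal to Holm's Theorem 2.20 is needed.
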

\begin{proof}There is an exact sequence of projective left $R$-modules $$\Xi: \cdots\longrightarrow
P^{-1}\stackrel{g^{-1}}\longrightarrow P^{0}\stackrel{g^{0}}\longrightarrow P^{1}\stackrel{g^{1}}\longrightarrow P^{2}\longrightarrow \cdots$$  such that $\coker(f)\cong\ker(g^{0})$ and $\Hom_{R}(\Xi, P)$ is also exact for any projective left $R$-module $P$.

Let $f_{1}=f$, $f_{j}=f(M\otimes f) \cdots(M^{\otimes_{R}j-1}\otimes f): M^{\otimes_{R}j}\otimes_{R} X\rightarrow X$ and $\pi_{j}: X\rightarrow\coker(f_{j})$ be the canonical epimorphism. Then $f_{n+1}=0$ and $\pi_{n+1}$ is just the identity map.

Since the sequence $M^{\otimes_{R}n+1}\otimes_{R} X\stackrel{M\otimes f_{n}}\longrightarrow  M\otimes_{R} X\stackrel{f}\longrightarrow X$  is exact, it is easy to check that all the sequences $$M^{\otimes_{R}j}\otimes_{R} X\stackrel{M\otimes f_{j-1}}\longrightarrow  M\otimes_{R} X\stackrel{\pi_{j}f}\longrightarrow \coker(f_{j})$$  are exact for $2\leq j\leq n+1$.

Define $\rho_{ji}: \coker(f_{j})\longrightarrow \coker(f_{i})$ by $\rho_{ji}(\pi_{j}(x))=\pi_{i}(x)$ for $i\leq j-1$.  By \cite[Lemma 2.7(1)]{M1}, there are the exact sequences   $$0\longrightarrow M\otimes_{R}\coker(f_{j-1})\stackrel{\alpha_{j}}\longrightarrow \coker(f_{j})\stackrel{\rho_{j1}}\longrightarrow \coker(f)\longrightarrow 0$$  such that $\alpha_{j}(M\otimes \pi_{j-1})=\pi_{j} f$ for $2\leq j\leq n+1$.

Let $\psi_{1}^{0}: \coker(f)\longrightarrow P^{0}$ be the inclusion and $\eta: P^{-1}\longrightarrow\coker(f)$  the  canonical  epimorphism such that $\psi_{1}^{0}\eta=g^{-1}$. There is $\delta: P^{-1}\longrightarrow X$ such that $\pi_{1}\delta=\eta$.

Since $pd(_{R}M^{\otimes_{R}i})<\infty$ for any $1\leq i\leq n$, $pd(_{R}M^{\otimes_{R}i}\otimes_{R}P^{k})<\infty$ for any $k\in\mathbb{N}$. Thus the complex $\Hom_{R}(\Xi, M^{\otimes_{R}i}\otimes_{R}P^{k})$ is exact. So $\Ext^{1}_{R}(\ker(g^{k}), M^{\otimes_{R}i}\otimes_{R}P^{k})=0$. In particular,  $\Ext^{1}_{R}(\coker(f), M^{\otimes_{R}i}\otimes_{R}P^{0})=0$.

We next define $\psi_{j}^{i}: \coker(f_{j})\longrightarrow M^{\otimes_{R}i}\otimes_{R}P^{0}$ by induction for $2\leq j\leq n+1$ and $0\leq i\leq j-1$.

Write $\psi_{j}^{0}=\psi_{1}^{0}\rho_{j1}: \coker(f_{j})\longrightarrow P^{0}$.

There is $\psi_{j}^{j-1}: \coker(f_{j})\longrightarrow M^{\otimes_{R}j-1}\otimes_{R}P^{0}$ such that $\psi_{j}^{j-1}\alpha_{j}=M\otimes\psi_{j-1}^{j-2}.$

Define $\psi_{j}^{i-1}=\psi_{i}^{i-1}\rho_{ji}: \coker(f_{j})\longrightarrow  M^{\otimes_{R}i-1}\otimes_{R}P^{0}$ for $2\leq i\leq j-1$.

Define $\lambda_{j}: \coker(f_{j})\longrightarrow \coprod_{i=0}^{j-1}(M^{\otimes_{R}i}\otimes_{R} P^{0})$  by $$\lambda_{j}(y)=(\psi_{j}^{0}(y),\psi_{j}^{1}(y), \psi_{j}^{2}(y),\cdots,\psi_{j}^{j-1}(y)).$$

Define $\xi_{j}: \coprod_{i=0}^{j-1}(M^{\otimes_{R}i}\otimes_{R} P^{-1})\longrightarrow \coker(f_{j})$ by $$\xi_{j}(y_{0},y_{1},\cdots,y_{j-1})=\pi_{j}\delta(y_{0})+\alpha_{j}(M\otimes \pi_{j-1}\delta)(y_{1})+\alpha_{j}(M\otimes \alpha_{j-1}(M\otimes\pi_{j-2}\delta))(y_{2})+\cdots$$$$+\alpha_{j}(M\otimes \alpha_{j-1}(M\otimes\alpha_{j-2}\cdots(M\otimes \alpha_{2}(M\otimes\pi_{1}\delta))\cdots))(y_{j-1}).$$

Since  $fd(M_{R})<\infty$, one has the exact sequence of  left $R$-modules
$$M\otimes_{R}\Xi: \cdots\longrightarrow
M\otimes_{R} P^{-1}\stackrel{M\otimes g^{-1}}\longrightarrow
M\otimes_{R} P^{0}\stackrel{M\otimes g^{0}}\longrightarrow
M\otimes_{R} P^{1}\stackrel{M\otimes g^{1}}\longrightarrow
M\otimes_{R} P^{2}\longrightarrow \cdots$$ with
$M\otimes_{R}\coker(f)\cong\ker({M\otimes g^{0}})$.

By  the generalized Horseshoe Lemma \cite[Lemma 1.6]{Z}, one gets the following commutative diagrams:
$$\xymatrix{&0\ar[d]&&0\ar[d] &0\ar[d] &\\
0 \ar[r] &M\otimes_R \coker(f)\ar[d]^{M\otimes\psi_{1}^{0}}\ar[rr]^{\alpha_{2}}&&\coker(f_{2})\ar@{.>}[d]^{\lambda_{2}}\ar[r]^{\rho_{21}}&\coker(f)\ar[r]\ar[d]^{\psi_{1}^{0}}&0\\
0\ar[r] &M\otimes_{R}P^{0}\ar[d]^{^{M\otimes g^{0}}}\ar[rr]&&P^{0}\oplus (M\otimes_{R}P^{0})\ar@{.>}[d]^{h_{2}^{0}}
\ar[r]&P^{0}\ar[r]\ar[d]^{g^{0}}&0\\
0\ar[r]&M\otimes_{R}P^{1}\ar[d]^{M\otimes g^{1}}\ar[rr]&&P^{1}\oplus (M\otimes_{R}P^{1})\ar@{.>}[d]^{h_{2}^{1}}
\ar[r]&P^{1}\ar[r]\ar[d]^{g^{1}}&0\\
0\ar[r]&M\otimes_{R} P^{2}\ar[d]\ar[rr]&&P^{2}\oplus (M\otimes_{R}P^{2})
\ar@{.>}[d]\ar[r]&P^{2}\ar[d]\ar[r]&0\\
&\vdots &&\vdots &\vdots &}$$
and
$$\xymatrix{&\vdots\ar[d]
&&\vdots\ar@{.>}[d] &\vdots\ar[d] &\\
0\ar[r] &M\otimes_{R}P^{-2}\ar[d]^{^{M\otimes g^{-2}}}\ar[rr]&&P^{-2}\oplus (M\otimes_{R}P^{-2})\ar@{.>}[d]^{h_{2}^{-2}}
\ar[r]&P^{-2}\ar[r]\ar[d]^{g^{-2}}&0\\0
\ar[r]&M\otimes_{R}P^{-1}\ar[d]^{M\otimes \eta}\ar[rr]&&P^{-1}\oplus (M\otimes_{R}P^{-1})\ar@{.>}[d]^{\xi_{2}}
\ar[r]&P^{-1}\ar[r]\ar[d]^{\eta}&0\\0\ar[r] &M\otimes_R \coker(f)\ar[d]\ar[rr]^{\alpha_{2}}&&\coker(f_{2})\ar[d]
\ar[r]^{\rho_{21}}&\coker(f)\ar[r]\ar[d]&0\\
&0&&0 &0 &}$$ with exact rows and columns and $h_{2}^{k}=\biggl(\begin{matrix} g^{k}&0\\\sigma_{2}^{k}&M\otimes g^{k}\end{matrix}\biggr)$.

Since  $fd(M\otimes_{R}M_{R})<\infty$, the functor $M\otimes_{R}-$ leaves the middle columns in the above commutative diagrams exact. By  the generalized Horseshoe Lemma, the exact sequence  $$0\longrightarrow M\otimes_{R}\coker(f_{2})\stackrel{\alpha_{3}}\longrightarrow \coker(f_{3})\stackrel{\rho_{31}}\longrightarrow \coker(f)\longrightarrow 0$$
yields the exact sequence of  left $R$-modules
$$0\rightarrow \coker(f_{3})\stackrel{\lambda_{3}}\longrightarrow  \coprod_{i=0}^{2}(M^{\otimes_{R}i}\otimes_{R} P^{0})\stackrel{h_{3}^{0}}\longrightarrow \coprod_{i=0}^{2}(M^{\otimes_{R}i}\otimes_{R} P^{1})\stackrel{h_{3}^{1}}\longrightarrow  \coprod_{i=0}^{2}(M^{\otimes_{R}i}\otimes_{R} P^{2})\longrightarrow \cdots$$   and $$\cdots\longrightarrow  \coprod_{i=0}^{2}(M^{\otimes_{R}i}\otimes_{R} P^{-2})\stackrel{h_{3}^{-2}}\longrightarrow \coprod_{i=0}^{2}(M^{\otimes_{R}i}\otimes_{R} P^{-1})\stackrel{\xi_{3}}\longrightarrow  \coker(f_{3})\rightarrow 0$$ with $h_{3}^{k}=\biggl(\begin{matrix} g^{k}&0&0\\\sigma^{k}_{2}&M\otimes g^{k}&0\\\sigma^{k}_{3}&M\otimes\sigma^{k}_{2}&M\otimes_{R} M\otimes g^{k}\end{matrix}\biggr)$.

Iteratively,  the exact sequence $$0\longrightarrow M\otimes_{R}\coker(f_{n})\stackrel{\alpha_{n+1}}\longrightarrow X\stackrel{\pi_{1}}\longrightarrow \coker(f)\longrightarrow 0$$  yields the exact sequence of  left $R$-modules $$0\rightarrow X\stackrel{\lambda_{n+1}}\longrightarrow  \coprod_{i=0}^{n}(M^{\otimes_{R}i}\otimes_{R} P^{0})\stackrel{h_{n+1}^{0}}\longrightarrow \coprod_{i=0}^{n}(M^{\otimes_{R}i}\otimes_{R} P^{1})\stackrel{h_{n+1}^{1}}\longrightarrow  \coprod_{i=0}^{n}(M^{\otimes_{R}i}\otimes_{R} P^{2})\longrightarrow \cdots$$   and $$\cdots\longrightarrow  \coprod_{i=0}^{n}(M^{\otimes_{R}i}\otimes_{R} P^{-2})\stackrel{h_{n+1}^{-2}}\longrightarrow \coprod_{i=0}^{n}(M^{\otimes_{R}i}\otimes_{R} P^{-1})\stackrel{\xi_{n+1}}\longrightarrow X\rightarrow 0$$ with $h_{n+1}^{k}=\begin{pmatrix} g^{k}&0&\cdots&0\\\sigma^{k}_{2}&M\otimes g^{k}&\cdots&0\\\vdots&\vdots&&\vdots\\\sigma^{k}_{n+1}&M\otimes\sigma^{k}_{n}&\cdots&M^{\otimes_{R}n}\otimes g^{k}\end{pmatrix}$.

The following commutative diagram
$${\Small \xymatrix{&M\otimes_R X\ar[d]_{f}\ar[rr]^{M\otimes\lambda_{n+1}}&&M\otimes_R\coprod_{i=0}^{n}(M^{\otimes_{R}i}\otimes_{R} P^{0})\ar[d]^{\mu_{0}}\ar[r]^{M\otimes h_{n+1}^{0}}&M\otimes_R\coprod_{i=0}^{n}(M^{\otimes_{R}i}\otimes_{R} P^{1})\ar[d]^{\mu_{1}}\ar[r]&\cdots\\
0\ar[r]&X\ar[rr]^{\lambda_{n+1}}&& \coprod_{i=0}^{n}(M^{\otimes_{R}i}\otimes_{R} P^{0})\ar[r]^{h_{n+1}^{0}}& \coprod_{i=0}^{n}(M^{\otimes_{R}i}\otimes_{R} P^{1})\ar[r]&\cdots}}$$ implies that the sequence $0\longrightarrow(X,f)\stackrel{\lambda_{n+1}}\longrightarrow\textbf{T}(P^{0})\stackrel{h_{n+1}^{0}}\longrightarrow \textbf{T}(P^{1})\longrightarrow\cdots$ is exact.

The following commutative diagram
$${\Small\xymatrix{\cdots\ar[r]&M\otimes_R \coprod_{i=0}^{n}(M^{\otimes_{R}i}\otimes_{R} P^{-2})\ar[d]^{\mu_{-2}}\ar[r]^{M\otimes h_{n+1}^{-2}}&M\otimes_R \coprod_{i=0}^{n}(M^{\otimes_{R}i}\otimes_{R} P^{-1})\ar[d]^{\mu_{-1}}\ar[rr]^{M\otimes\xi_{n+1}}&&M\otimes_R X\ar[d]_{f}\\
\cdots\ar[r]& \coprod_{i=0}^{n}(M^{\otimes_{R}i}\otimes_{R} P^{-2})\ar[r]^{h_{n+1}^{-2}}& \coprod_{i=0}^{n}(M^{\otimes_{R}i}\otimes_{R} P^{-1})\ar[rr]^{\xi_{n+1}}&&X\ar[r]&0}}$$ implies that the sequence $\cdots\longrightarrow\textbf{T}(P^{-2})\stackrel{h_{n+1}^{-2}}\longrightarrow \textbf{T}(P^{-1})\stackrel{\xi_{n+1}}\longrightarrow(X,f)\longrightarrow 0$ is exact.

Let $h_{n+1}^{-1}=\lambda_{n+1}\xi_{n+1}$. Then we obtain the desired exact sequence of projective left $R\ltimes_{n}M$-modules $$\Delta: \cdots\longrightarrow \textbf{T}(P^{-2})\stackrel{h_{n+1}^{-2}}\longrightarrow \textbf{T}(P^{-1})\stackrel{h_{n+1}^{-1}}\longrightarrow
\textbf{T}(P^{0})\stackrel{h_{n+1}^{0}}\longrightarrow \textbf{T}(P^{1})\stackrel{h_{n+1}^{1}}\longrightarrow \textbf{T}(P^{2})\longrightarrow \cdots$$ with $(X,f)\cong\ker(h_{n+1}^{0})$.

By \cite[Theorem 2.8]{M1}, a projective left $R\ltimes_{n}M$-module is isomorphic to $\textbf{T}(Q)$ with $Q$  a projective left $R$-module. By \cite[Lemma 2.6(3)]{M1}, there are exact sequences  $$0\rightarrow (\coprod_{i=1}^{n}(M^{\otimes_{R}i}\otimes_{R}Q), w_{1})\rightarrow \textbf{T}(Q)\rightarrow \textbf{Z}(Q)\rightarrow 0,$$  $$0\rightarrow(\coprod_{i=2}^{n}(M^{\otimes_{R}i}\otimes_{R}Q), w_{2})\rightarrow (\coprod_{i=1}^{n}(M^{\otimes_{R}i}\otimes_{R}Q), w_{1})\rightarrow\textbf{Z}(M\otimes_{R} Q)\rightarrow 0,$$ $$\cdots\cdots$$  $$0\rightarrow\textbf{Z}(M^{\otimes_{R}n}\otimes_{R}Q)\rightarrow (\coprod_{i=n-1}^{n}(M^{\otimes_{R}i}\otimes_{R}Q), w_{n-1})\rightarrow\textbf{Z}(M^{\otimes_{R}n-1}\otimes_{R} Q)\rightarrow 0.$$
 Hence one gets the exact sequences of complexes {\small $$0\rightarrow \Hom_{R\ltimes_{n}M}(\Delta,(\coprod_{i=1}^{n}(M^{\otimes_{R}i}\otimes_{R}Q), w_{1}))\rightarrow \Hom_{R\ltimes_{n}M}(\Delta,\textbf{T}(Q))\rightarrow \Hom_{R\ltimes_{n}M}(\Delta,\textbf{Z}(Q))\rightarrow 0,$$}
{\tiny $$0\rightarrow \Hom_{R\ltimes_{n}M}(\Delta,(\coprod_{i=2}^{n}(M^{\otimes_{R}i}\otimes_{R}Q), w_{2}))\rightarrow \Hom_{R\ltimes_{n}M}(\Delta,(\coprod_{i=1}^{n}(M^{\otimes_{R}i}\otimes_{R}Q), w_{1}))\rightarrow \Hom_{R\ltimes_{n}M}(\Delta,\textbf{Z}(M\otimes_{R} Q))\rightarrow 0,$$
$$\cdots\cdots$$  $$0\rightarrow \Hom_{R\ltimes_{n}M}(\Delta,\textbf{Z}(M^{\otimes_{R}n}\otimes_{R}Q))\rightarrow \Hom_{R\ltimes_{n}M}(\Delta, (\coprod_{i=n-1}^{n}(M^{\otimes_{R}i}\otimes_{R}Q), w_{n-1}))\rightarrow \Hom_{R\ltimes_{n}M}(\Delta,\textbf{Z}(M^{\otimes_{R}n-1}\otimes_{R}Q))\rightarrow 0.$$}
Since $pd(_{R}M^{\otimes_{R}i})<\infty$, one has $pd(_{R}M^{\otimes_{R}i}\otimes_{R}Q)<\infty$. This implies that  $\Hom_{R}(\Xi,M^{\otimes_{R}i}\otimes_{R}Q)$ is exact. So $\Hom_{R\ltimes_{n}M}(\Delta,\textbf{Z}(M^{\otimes_{R}i}\otimes_{R}Q))\cong\Hom_{R}(\Xi,M^{\otimes_{R}i}\otimes_{R}Q)$ is exact for any $0\leq i\leq n$. Thus $\Hom_{R\ltimes_{n}M}(\Delta,\textbf{T}(Q))$ is exact by \cite[Theorem 1.4.7]{EJ}.

It follows that  $(X,f)$ is a Gorenstein projective left $R\ltimes_{n}M$-module.
\end{proof}
Combining Theorems \ref{thm: 3.1} and \ref{thm: 3.2}, we have
\begin{cor} \label{cor: 3.3}Assume that  $fd(M^{\otimes_{R}i}_{R})<\infty$ and $pd(_{R}M^{\otimes_{R}i})<\infty$ for any $1\leq i\leq n$, $fd(\textbf{Z}(R)_{R\ltimes_{n}M})<\infty$ and $pd(_{R\ltimes_{n}M}\textbf{Z}(R))<\infty$. Then
\begin{enumerate}\item $\textbf{T}(X)$ is a Gorenstein projective left $R\ltimes_{n}M$-module if and only if  $X$ is a Gorenstein projective left $R$-module.\item $\textbf{Z}(X)$ is a Gorenstein projective left $R\ltimes_{n}M$-module if and only if $M\otimes_R X=0$ and $X$ is a Gorenstein projective left $R$-module.
\end{enumerate}
\end{cor}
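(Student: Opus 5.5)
The plan is to apply Theorems \ref{thm: 3.1} and \ref{thm: 3.2}, which together say that under the standing hypotheses $(X,f)$ is Gorenstein projective exactly when the sequence $M^{\otimes_{R}n+1}\otimes_{R} X\rightarrow M\otimes_{R} X\stackrel{f}\rightarrow X$ is exact and $\coker(f)$ is Gorenstein projective. For each of the two objects, $\textbf{T}(X)$ and $\textbf{Z}(X)$, it then suffices to compute the structure map, decide when the sequence is exact, and identify the cokernel.

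For (1), write $\textbf{T}(X)=(\coprod_{i=0}^{n}(M^{\otimes_{R}i}\otimes_{R}X),\mu)$. Since $\textbf{C}\textbf{T}=id_{R{\rm-Mod}}$, we get $\coker(\mu)\cong X$. The key point is that the sequence attached to $\textbf{T}(X)$ is always exact; this should be checked directly.
\begin{itemize}
\item The map $\mu$ sends $M\otimes_R \textbf{T}(X)=\coprod_{i=0}^{n}M^{\otimes_{R}i+1}\otimes_{R}X$ onto the summands of degrees $1,\dots,n$, as the shift. Hence $\ker\mu$ is the top summand $M^{\otimes_{R}n+1}\otimes_{R}X$, sitting in position $n$.
\item The composite $\mu(M\otimes\mu)\cdots(M^{\otimes_{R}n}\otimes\mu)$, restricted to $M^{\otimes_{R}n}\otimes M\otimes_R \textbf{T}(X)$, shifts by $n$ in total. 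The map $(M\otimes\mu)\cdots(M^{\otimes_{R}n}\otimes\mu)$ shifts by $n$ inside $M\otimes_R\textbf{T}(X)$. So it carries the degree-$0$ summand of $M^{\otimes_{R}n+1}\otimes_R\textbf{T}(X)$, namely $M^{\otimes_{R}n+1}\otimes_R X$, identically onto position $n$ of $M\otimes_R\textbf{T}(X)$.
\end{itemize}
Therefore the image equals $\ker\mu$ and the sequence is exact. The theorems then give: $\textbf{T}(X)$ is Gorenstein projective if and only if $X$ is. Alternatively, (1) could be quoted from \cite[Theorem 4.4]{M1}, since $\textbf{T}(X)$ has $\Tor_1^{R\ltimes_{n}M}(\textbf{Z}(R),\textbf{T}(X))=0$ (it is $\textbf{Z}(R)\otimes-$ applied to a $\textbf{T}$ of a projective resolution).

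For (2), $\textbf{Z}(X)=(X,0)$, so $\coker(0)=X$. The sequence reads $M^{\otimes_{R}n+1}\otimes_{R} X\stackrel{0}\rightarrow M\otimes_{R}X\stackrel{0}\rightarrow X$, because the composite involves $M^{\otimes_{R}n}\otimes 0=0$. This is exact exactly when $\ker(0)=M\otimes_R X$ equals the image $0$, that is, when $M\otimes_R X=0$. Substituting into the two theorems gives (2) at once.

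I expect the only real obstacle to be the bookkeeping in (1): verifying that the iterated composite in $\textbf{T}(X)$ really is the inclusion of the top summand. This needs care with the indexing of $\mu$ and its tensor translates, but it is routine.
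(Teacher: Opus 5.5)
Your proposal is correct and is essentially the paper's argument. The paper obtains the corollary simply by combining Theorems \ref{thm: 3.1} and \ref{thm: 3.2}. Your checks are exactly the routine details it leaves implicit: the sequence is always exact for $\textbf{T}(X)$ with $\coker(\mu)\cong X$, and it is exact for $\textbf{Z}(X)=(X,0)$ precisely when $M\otimes_R X=0$.

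There are two harmless slips. First, the composite that shifts by $n$ is $(M\otimes\mu)\cdots(M^{\otimes_{R}n}\otimes\mu)$; with $\mu$ in front it shifts by $n+1$ and is zero, as your next sentence correctly uses. Second, in the optional alternative, $\textbf{T}$ of a projective resolution of $X$ need not resolve $\textbf{T}(X)$, so that parenthetical does not justify the $\Tor_1$-vanishing as stated. Your direct computation makes it unnecessary anyway.
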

\begin{lem}\label{lem: 3.4}Let $[Y,g]$ be a left $R\ltimes_{n}M$-module. Then there are exact sequences in $R\ltimes_{n}M$-Mod  $$0\longrightarrow \textbf{Z}(\ker(g))\longrightarrow [Y,g]\longrightarrow[\im(g),w_{1}]\longrightarrow 0,$$  $$0\rightarrow\textbf{Z}(\ker(w_{1}))\rightarrow [\im(g),w_{1}] \rightarrow[\im(w_{1}),w_{2}]\rightarrow 0,$$ $$\cdots\cdots$$ $$0\rightarrow\textbf{Z}(\ker(w_{n-2}))\rightarrow[\im(w_{n-3}),w_{n-2}]\rightarrow[\im(w_{n-2}),w_{n-1}]\rightarrow 0,$$
 $$0\rightarrow\textbf{Z}(\ker(w_{n-1}))\rightarrow[\im(w_{n-2}),w_{n-1}]\rightarrow\textbf{Z}(\ker(w_{n}))\rightarrow 0.$$
\end{lem}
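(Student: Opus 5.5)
The plan is to first define the maps $w_k$ and then verify each short exact sequence directly, using the description of objects of $\Upsilon$.

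\emph{Setup.} Write $H=\Hom_R(M,-)$. Given $[Y,g]$ with $g\colon Y\to H(Y)$, the defining condition says that the composite
$$Y\xrightarrow{g}H(Y)\xrightarrow{H(g)}H^2(Y)\to\cdots\xrightarrow{H^n(g)}H^{n+1}(Y)$$
is zero. Set $g_1=g$ and $g_j=H^{j-1}(g)\cdots H(g)\,g\colon Y\to H^j(Y)$, so that $g_{n+1}=0$.

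\emph{Definition of the $w_k$.} Identify $\im(g)$ with $Y/\ker(g)$, and define $w_1\colon \im(g)\to H(\im(g))$ as the map induced by $g$. To see it is well defined, note that $g$ carries $\ker(g)$ into $\ker H(g)\supseteq$, and more precisely $g(\ker g)=0$. Concretely, for $y\in Y$ put
$$w_1(g(y))=H(\bar{\ }\,)(g(y)),$$
where $\bar{\ }\colon Y\to\im(g)$ is the corestriction of $g$; that is, $w_1(g(y))$ is the map $m\mapsto g(g(y)(m))$. This is well defined because $g(y)=0$ forces the image to be zero. Since $\im(g)\subseteq H(Y)$ and $H$ is left exact, $H(\im g)\subseteq H(H(Y))$, and under this inclusion $w_1$ is just $H(g)$ restricted to $\im(g)$. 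Hence $\im(w_1)=H(g)(\im g)=\im(g_2)$, up to the inclusion $H^2$. Inductively, $w_k$ is the restriction of $H^k(g)$ to $\im(w_{k-1})\cong\im(g_k)$, which sits inside $H^k(Y)$ through iterated left exactness.

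\emph{Exactness and objects.} Each $[\im(w_{k-1}),w_k]$ is an object of $\Upsilon$ because its iterated composite is a restriction of the corresponding composite for $[Y,g]$, which vanishes. The canonical surjection $\im(w_{k-2})\to\im(w_{k-1})$, namely the corestriction of $w_{k-1}$, is a morphism in $\Upsilon$ by construction. Its kernel is $\ker(w_{k-1})$, and on this kernel the structure map is zero, so the kernel is $\textbf{Z}(\ker w_{k-1})$. Together with $\textbf{Z}$ being an inclusion of a subobject, this gives the displayed short exact sequences. Exactness can be checked on underlying $R$-modules, as recalled in Section 2.

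\emph{Final step.} For the last sequence we need $w_n=0$ on $\im(w_{n-1})$, which follows from $g_{n+1}=0$. Then $\ker(w_n)=\im(w_{n-1})$ and $[\im(w_{n-1}),w_n]=\textbf{Z}(\ker w_n)$.

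\emph{Main obstacle.} The delicate point is the commutativity of the square
$$\begin{array}{ccc}
\im(w_{k-2}) & \longrightarrow & \im(w_{k-1})\\
\downarrow & & \downarrow\\
H(\im w_{k-2}) & \longrightarrow & H(\im w_{k-1})
\end{array}$$
that is, naturality of the corestriction. This amounts to the identity $H(w_{k-1})\,w_{k-1}=w_k\,w_{k-1}$. I would verify it by embedding everything into $H^{k+1}(Y)$ via the monomorphisms supplied by left exactness of $H$.
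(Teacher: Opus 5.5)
Your proposal is correct and follows the paper's proof. Both define $w_1$ as the map induced by $g$ on $\im(g)\cong Y/\ker(g)$, identify the kernel of the corestriction as $\textbf{Z}(\ker(g))$, iterate, and use $g_{n+1}=0$ to conclude $w_n=0$.

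Your identification of $w_k$ with the restriction of $\Hom_R^k(M,g)$ to $\im(g_k)\subseteq\Hom_R^k(M,Y)$ is a more explicit form of the paper's bookkeeping $\ker(w_k)=g_k(\ker(g_{k+1}))$. Also, the square you call the ``main obstacle'' commutes by the very definition of $w_k$ as the induced map.
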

\begin{proof}Let $g_{i}=\Hom_{R}^{i-1}(M,g)\cdots\Hom_{R}(M,g)g: Y\rightarrow \Hom_{R}^{i}(M,Y)$.

There exists the following commutative diagram with exact rows: $$\xymatrix{0\ar[r]&\ker(g)\ar[r]\ar[d]_{0}&Y\ar[d]_{g}\ar[r]&\im(g)\ar[d]_{w_{1}}\ar[r]&0\\0\ar[r]&\Hom_{R}(M,\ker(g))\ar[r]&\Hom_{R}(M,Y)\ar[r]&\Hom_{R}(M,\im(g))}$$ where $w_{1}$ is induced by $g$ such that $\ker(w_{1})= g(\ker(g_{2}))$. We can form this diagram again with $[Y,g]$ replaced by $[\im(g),w_{1}]$ and get the left $R\ltimes_{n}M$-module $[\im(w_{1}),w_{2}]$ such that $\ker(w_{2})= w_{1}(g(\ker(g_{3})))= g_{2}(\ker(g_{3}))$. The next step gives us the left $R\ltimes_{n}M$-module $[\im(w_{2}),w_{3}]$ such that $\ker(w_{3})= w_{2}(w_{1}(g(\ker(g_{4}))))= g_{3}(\ker(g_{4}))$. Since $g_{n+1}=0$, we have $w_{n}=0$. So we will eventually reach the above commutative diagram with the two extreme homomorphisms equal to zero by this process, i.e., there exist  exact sequences $$0\longrightarrow \textbf{Z}(\ker(g))\longrightarrow [Y,g]\longrightarrow[\im(g),w_{1}]\longrightarrow 0,$$  $$0\rightarrow\textbf{Z}(\ker(w_{1}))\rightarrow [\im(g),w_{1}] \rightarrow[\im(w_{1}),w_{2}]\rightarrow 0,$$ $$\cdots\cdots$$ $$0\rightarrow\textbf{Z}(\ker(w_{n-2}))\rightarrow[\im(w_{n-3}),w_{n-2}]\rightarrow[\im(w_{n-2}),w_{n-1}]\rightarrow 0,$$
 $$0\rightarrow\textbf{Z}(\ker(w_{n-1}))\rightarrow[\im(w_{n-2}),w_{n-1}]\rightarrow\textbf{Z}(\ker(w_{n}))\rightarrow 0.$$

This completes the proof.
\end{proof}
Recall that a left $R$-module $Y$ is \emph{Gorenstein injective} \cite{EJ1} if there is an exact sequence of injective left $R$-modules $\Theta: \cdots\rightarrow
E^{-1}\rightarrow E^{0}\stackrel{f^{0}}\rightarrow E^{1}\rightarrow
E^{2}\rightarrow \cdots$ such that $Y\cong\ker(f^{0})$ and $\Hom_{R}(G,\Theta)$ is also exact for any injective left $R$-module $G$.

Using Lemma \ref{lem: 3.4} and a dual argument of the proof of Theorems \ref{thm: 3.1} and \ref{thm: 3.2}, we have
\begin{thm} \label{thm: 3.5}Assume that  $[Y,g]$ is a left $R\ltimes_{n}M$-module.
\begin{enumerate}\item If $fd(\textbf{Z}(R)_{R\ltimes_{n}M})<\infty$, $pd(_{R\ltimes_{n}M}\textbf{Z}(R))$ or $id(_{R\ltimes_{n}M}\textbf{Z}(R))<\infty$, and $[Y,g]$ is a Gorenstein injective left $R\ltimes_{n}M$-module, then the sequence $Y\stackrel{g}\longrightarrow \Hom_{R}(M,Y)$ $\stackrel{{\rm Hom}_{R}^{n}(M,g)\cdots{\rm Hom}_{R}(M,g)}\longrightarrow \Hom^{n+1}_{R}(M,Y)$  is exact and $\ker(g)$ is a Gorenstein injective left $R$-module.\item If $fd(M^{\otimes_{R}i}_{R})<\infty$, $pd(_{R}M^{\otimes_{R}i})<\infty$ for any $1\leq i\leq n$, the sequence $Y\stackrel{g}\longrightarrow \Hom_{R}(M,Y)\stackrel{{\rm Hom}_{R}^{n}(M,g)\cdots{\rm Hom}_{R}(M,g)}\longrightarrow \Hom^{n+1}_{R}(M,Y)$  is exact and $\ker(g)$ is a Gorenstein injective left $R$-module, then $[Y,g]$  is a Gorenstein injective left $R\ltimes_{n}M$-module.
\end{enumerate}
\end{thm}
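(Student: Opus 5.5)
We dualize the arguments of Theorems \ref{thm: 3.1} and \ref{thm: 3.2}, working in the category $\Upsilon$. The key adjunctions are $(\textbf{U},\textbf{H})$ and $(\textbf{Z},\textbf{K})$. The dual of \cite[Theorem 2.8]{M1} says that every injective left $R\ltimes_{n}M$-module is of the form $\textbf{H}(E)$ with $E$ injective. Since $\textbf{K}\textbf{H}=id$, we have $\Hom_{R\ltimes_nM}(\textbf{Z}(R),\textbf{H}(E))\cong \Hom_R(R,E)\cong E$.

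\emph{Part (1).} Take a complete injective resolution $\Delta:\cdots\to\textbf{H}(E^{-1})\to\textbf{H}(E^{0})\stackrel{\varphi^0}\to\textbf{H}(E^{1})\to\cdots$ with $[Y,g]\cong\ker(\varphi^0)$. Apply $\Hom_{R\ltimes_nM}(\textbf{Z}(R),-)$, which is $\textbf{K}$.
\begin{itemize}
\item If $pd(\textbf{Z}(R))<\infty$, the resulting complex $\textbf{K}(\Delta)$ is exact, by the dual of \cite[Proposition 2.3]{H}: $\Hom(P,\Delta)$ is exact for $P$ of finite projective dimension.
\item If $id(\textbf{Z}(R))<\infty$, one argues similarly via a finite injective coresolution of $\textbf{Z}(R)$ by modules $\textbf{H}(I)$.
\end{itemize}
In either case $\textbf{K}(\Delta)$ is an exact complex of injective $R$-modules with $\ker(g)\cong\ker(\textbf{K}(\varphi^0))$, because $\textbf{K}$ is left exact.

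To show that $\Hom_R(G,\textbf{K}(\Delta))$ is exact for every injective $G$, note that $\Hom_R(G,\textbf{K}(\Delta))\cong \Hom_{R\ltimes_nM}(\textbf{Z}(G),\Delta)$. Pass to character modules: $\textbf{Z}(G)$ lies in the class governed by $fd(\textbf{Z}(R)_{R\ltimes_nM})<\infty$, since $G^{+}$ is flat over $R$, giving finite flat dimension. The duality $\Hom(\textbf{Z}(G),\Delta)$ versus $\textbf{Z}(G)^{+}\otimes$ requires care here, so we would instead quote the injective analogue \cite[Lemma 2.3]{EIT}: if $fd(N)<\infty$ then $\Hom(N^{+},\Delta)$ is exact. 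Then we reduce using that an injective $G$ is a summand of $G^{++}$.

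Finally, the vanishing of $\Ext^1_{R\ltimes_nM}(\textbf{Z}(R),[Y,g])$, which follows from exactness of $\textbf{K}(\Delta)$, translates via the dual of \cite[Theorem 4.4]{M1} into exactness of $Y\to\Hom_R(M,Y)\to\Hom^{n+1}_R(M,Y)$.

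\emph{Part (2).} Let $\Theta$ be a complete injective resolution of $\ker(g)$. Here Lemma \ref{lem: 3.4} replaces the cokernel filtration $\coker(f_j)$. Exactness of the given sequence makes each short exact sequence in Lemma \ref{lem: 3.4} behave like the dual of \cite[Lemma 2.7(1)]{M1}: it presents the pieces $\ker(w_j)$ as $\Hom_R(M,-)$ applied to earlier pieces, with $\ker(g)$ as the socle term. The hypothesis $pd(_RM^{\otimes i})<\infty$ makes $\Hom_R(M^{\otimes i},-)$ preserve exactness of $\Theta$, since $\Ext^{\geq1}_R(M^{\otimes i},E)=0$ for $E$ injective. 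Iterating the generalized Horseshoe Lemma \cite[Lemma 1.6]{Z} along these sequences then assembles an exact complex $\cdots\to\textbf{H}(E^{-1})\to\textbf{H}(E^0)\to\cdots$ with kernel $[Y,g]$; the compatibility with the structure maps $\nu$ is checked by the dual commutative diagrams. We expect this step to be the main obstacle, namely the bookkeeping needed to ensure the Horseshoe maps respect $\nu$ and that the Ext-vanishing needed to lift maps holds. For the lifting we use $\Ext^1_R(\Hom^i_R(M,E),\ker(g))=0$, which holds because $\Hom_R(M^{\otimes i},E)$ has finite injective dimension when $fd(M^{\otimes i}_R)<\infty$.

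For total acyclicity, take an injective $\textbf{H}(I)$ and filter it by the dual of \cite[Lemma 2.6(3)]{M1}, with factors $\textbf{Z}(\Hom^i_R(M,I))$. Each $\Hom_{R\ltimes_nM}(\textbf{Z}(\Hom^i_R(M,I)),\Delta)\cong\Hom_R(\Hom^i_R(M,I),\Theta)$ is exact, because $id\,\Hom_R(M^{\otimes i},I)\le fd(M^{\otimes i}_R)<\infty$. The standard $3\times3$ argument \cite[Theorem 1.4.7]{EJ} then gives exactness of $\Hom(\textbf{H}(I),\Delta)$, so $[Y,g]$ is Gorenstein injective.
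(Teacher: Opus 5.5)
Your overall plan is the paper's own: the paper proves this theorem only by citing Lemma 3.4 and ``a dual argument'' of Theorems 3.1 and 3.2. Most of your ingredients are placed correctly, but one step fails as written.

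\textbf{The flatness step in part (1).} To get $id(\textbf{Z}(G))<\infty$ for an injective left $R$-module $G$, you write $G$ as a summand of $G^{++}$ and assert that $G^{+}$ is flat. That is false in general. The character module of every injective left module is flat exactly when $R$ is left coherent (Cheatham--Stone), and nothing here forces coherence; with $M=0$ the ring $R$ is arbitrary.

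The repair is short. Take a free right $R$-module $F$ mapping onto $G^{+}$. Then $G\hookrightarrow G^{++}\hookrightarrow F^{+}$, so $G$ is a direct summand of $F^{+}$. Moreover $\textbf{Z}(F^{+})\cong(\textbf{Z}(F)_{R\ltimes_{n}M})^{+}$ with $\textbf{Z}(F)\cong\textbf{Z}(R)^{(I)}$, hence $id(\textbf{Z}(G))\le fd(\textbf{Z}(R)_{R\ltimes_{n}M})<\infty$. Exactness of $\Hom_{R}(G,\textbf{K}(\Delta))\cong\Hom_{R\ltimes_{n}M}(\textbf{Z}(G),\Delta)$ then follows by dimension shifting along a finite injective coresolution of $\textbf{Z}(G)$. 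This uses that $\Delta$ is degreewise injective and that $\Hom(E,\Delta)$ is exact for every injective $E$, i.e. the injective counterpart of Holm's Proposition 2.3. [EIT, Lemma 2.3] concerns tensor products and is not what you need.

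\textbf{The filtration in part (2).} Here the input to the Horseshoe lemma is misidentified. The sequences you actually describe, with $\ker(g)$ as sub-term at every stage and $\Hom_{R}(M,-)$ of an earlier stage as quotient, are
$0\to\ker(g)\to\ker(g_{j})\to\Hom_{R}(M,\ker(g_{j-1}))\to 0$ for $2\le j\le n+1$. Here $g_{j}=\Hom^{j-1}_{R}(M,g)\cdots\Hom_{R}(M,g)g$ and $\ker(g_{n+1})=Y$. These follow from the exactness hypothesis alone, as the kernel version of [M1, Lemma 2.7(1)], and they are what one iterates. Since $\ker(g)$ is coresolved by the injectives $E^{k}$, the right half of each Horseshoe step needs no Ext condition. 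The left half needs only $\Ext^{1}_{R}(\Hom^{i}_{R}(M,E),Z)=0$ for the cycles $Z$ of $\Theta$, as you say.

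The sequences of Lemma 3.4 are different. Beyond the first one, their sub-terms are $\ker(w_{j})$. In general these are identified with $\Hom^{j}_{R}(M,\ker(g))$ only by additionally using $\Ext^{1}_{R}(M^{\otimes_{R}i},\ker(g))=0$. They are also coresolved by the non-injective modules $\Hom^{j}_{R}(M,E^{k})$, so the extension obstructions there are not covered by the vanishing you have. In the paper, Lemma 3.4 is needed only for the filtration of $\textbf{H}(I)$ in the total-acyclicity step, which you handle correctly.

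\textbf{Justification of column exactness.} Exactness of $\Hom_{R}(M^{\otimes_{R}i},\Theta)$ comes from $pd(_{R}M^{\otimes_{R}i})<\infty$, by dimension shifting along the cycles of the exact injective complex $\Theta$. The same holds for $\Hom_{R}(M,-)$ applied to each constructed middle complex, which is a degreewise split extension of such complexes. The reason you give, $\Ext^{\geq 1}_{R}(M^{\otimes_{R}i},E)=0$ for injective $E$, holds for every module and does not suffice.
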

\begin{cor} \label{cor: 3.6}Assume that  $fd(M^{\otimes_{R}i}_{R})<\infty$, $pd(_{R}M^{\otimes_{R}i})<\infty$ for any $1\leq i\leq n$, $fd(\textbf{Z}(R)_{R\ltimes_{n}M})<\infty$, $pd(_{R\ltimes_{n}M}\textbf{Z}(R))$ or $id(_{R\ltimes_{n}M}\textbf{Z}(R))<\infty$. Then
\begin{enumerate}\item $\textbf{H}(Y)$  is a Gorenstein injective left $R\ltimes_{n}M$-module if and only if  $Y$ is a Gorenstein injective left $R$-module.\item $\textbf{Z}(Y)$  is a Gorenstein injective left $R\ltimes_{n}M$-module if and only if $\Hom_R(M, Y)=0$ and $Y$ is a Gorenstein injective left $R$-module.
\end{enumerate}
\end{cor}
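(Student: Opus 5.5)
The corollary follows by specializing Theorem \ref{thm: 3.5} to $[Y,g]=\textbf{H}(Y)$ and $[Y,g]=\textbf{Z}(Y)$. The hypotheses of the corollary contain those of both parts (1) and (2) of Theorem \ref{thm: 3.5}. Hence for any left $R\ltimes_{n}M$-module $[W,g]$ we get an equivalence: $[W,g]$ is Gorenstein injective if and only if $W\stackrel{g}\longrightarrow \Hom_{R}(M,W)\longrightarrow \Hom^{n+1}_{R}(M,W)$ is exact and $\ker(g)$ is Gorenstein injective. So it remains to compute $\ker(g)$ and check exactness in the two cases.

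For (1), take $\textbf{H}(Y)=[\coprod_{i=0}^{n}\Hom_{R}^{i}(M,Y),\nu]$. Since $\textbf{K}\textbf{H}=id$, we have $\ker(\nu)\cong Y$; concretely, $\ker(\nu)$ is the component $\Hom^{0}_{R}(M,Y)=Y$. Exactness can be checked by unwinding the structure map. Under the adjunction identification, the map $\coprod_{i=0}^{n}\Hom^{i}_{R}(M,Y)\to\Hom_{R}(M,\coprod_{i}\Hom^{i}_{R}(M,Y))\cong\coprod_{i}\Hom^{i+1}_{R}(M,Y)$ sends $(x_0,\dots,x_n)\mapsto(x_1,\dots,x_n,0)$, read in degrees $0,\dots,n$ of the target. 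The iterated composite $\Hom_{R}(M,-)$ of length $n$ then kills exactly the components of $\Hom_{R}(M,\textbf{H}(Y))$ whose image would fall outside degree $n+1$.

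I expect this bookkeeping to be the main obstacle, and I would handle it dually to the easy check that $\textbf{T}(X)$ satisfies the exactness of Theorem \ref{thm: 3.2}. In $\textbf{T}(X)$, $\mu$ is the shift, and $\ker\mu$ is the top summand $M^{\otimes n}\otimes X$. On the other side, the kernel of $M\otimes\mu\cdots$ applied $n$ times is the image of $\mu$, because only degree-shifts appear. The $\textbf{H}$ case is the formal dual. So in (1) the exactness always holds, and (1) reduces to ``$Y$ is Gorenstein injective''.

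For (2), take $\textbf{Z}(Y)=[Y,0]$. Then $\ker(0)=Y$, and the sequence is $Y\stackrel{0}\longrightarrow\Hom_{R}(M,Y)\stackrel{0}\longrightarrow\Hom^{n+1}_{R}(M,Y)$. This is exact precisely when $\Hom_{R}(M,Y)=0$. Substituting into the equivalence gives (2) immediately.
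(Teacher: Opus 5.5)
Your proposal is correct and follows the paper's route: the paper gives no separate proof and treats Corollary \ref{cor: 3.6} as a direct specialization of both parts of Theorem \ref{thm: 3.5} to $\textbf{H}(Y)$ (with $\ker(\nu)\cong\textbf{K}\textbf{H}(Y)=Y$) and to $\textbf{Z}(Y)=[Y,0]$, exactly as you do. The exactness check for $\textbf{H}(Y)$ is cleaner done directly: under $\Hom_{R}(M,\coprod_{i=0}^{n}\Hom_{R}^{i}(M,Y))\cong\coprod_{i=0}^{n}\Hom_{R}^{i+1}(M,Y)$ each $\Hom_{R}^{k}(M,\nu)$ is again the shift, so the $n$-fold composite sends $(z_{0},\dots,z_{n})\mapsto(z_{n},0,\dots,0)$ and its kernel $\{z_{n}=0\}$ is exactly $\im(\nu)$; your aside on $\textbf{T}(X)$ is garbled (there $\ker\mu$ is the summand $M^{\otimes_{R}n+1}\otimes_{R}X$, and it must equal the \emph{image} of the composite), so replace it with this direct check.
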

Recall that a \emph{tensor ring} of an $R$-bimodule $M$ is $T_{R}(M) = \coprod_{i=0}^{\infty}M^{\otimes_{R}i}$ \cite{Co}. It is easy to verify that there is a ring isomorphism $R\ltimes_{n} M\cong T_{R}(M)/\coprod_{i=n+1}^{\infty}M^{\otimes_{R}i}$. In particular, if  $M$ is $n$-nilpotent, i.e., $M^{\otimes_{R}n+1}=0$, then $R\ltimes_{n} M\cong T_{R}(M)$. Recently, Gorenstein homological modules over the tensor ring $T_{R}(M)$ with $M$ nilpotent have been investigated by many authors \cite{CL, DLST, TW}.

As consequences of Theorems \ref{thm: 3.1}, \ref{thm: 3.2} and \ref{thm: 3.5}, we have
\begin{cor} \label{cor: 3.7} Let $M$ be an $n$-nilpotent $R$-bimodule and $(X,f)$ a left $T_{R}(M)$-module.
\begin{enumerate}\item If $fd(\textbf{Z}(R)_{T_{R}(M)})<\infty$, $pd(_{T_{R}(M)}\textbf{Z}(R))<\infty$ and $(X,f)$ is a Gorenstein projective left $T_{R}(M)$-module, then $f$ is a monomorphism and $\coker(f)$ is a Gorenstein projective left $R$-module.\item If $fd(M^{\otimes_{R}i}_{R})<\infty$ and $pd(_{R}M^{\otimes_{R}i})<\infty$ for any $1\leq i\leq n$, $f$ is a monomorphism and $\coker(f)$ is a Gorenstein projective left $R$-module, then  $(X,f)$ is a Gorenstein projective left $T_{R}(M)$-module.
\end{enumerate}
\end{cor}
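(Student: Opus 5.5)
Corollary \ref{cor: 3.7} follows from Theorems \ref{thm: 3.1} and \ref{thm: 3.2} once we note that, for $M$ $n$-nilpotent, $T_{R}(M)\cong R\ltimes_{n}M$ as rings. The left $T_{R}(M)$-modules are then exactly the objects $(X,f)$ of $\Omega$, and the functor $\textbf{Z}$ and the module $\textbf{Z}(R)$ mean the same thing on both sides. So the hypotheses $fd(\textbf{Z}(R)_{T_{R}(M)})<\infty$ and $pd(_{T_{R}(M)}\textbf{Z}(R))<\infty$ are literally those of Theorem \ref{thm: 3.1}. Likewise, the hypotheses of part (2) are literally those of Theorem \ref{thm: 3.2}.

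It remains to translate the exactness condition of those theorems. Since $M^{\otimes_{R}n+1}=0$, we have $M^{\otimes_{R}n+1}\otimes_{R}X=0$. Hence the sequence
$$M^{\otimes_{R}n+1}\otimes_{R} X\longrightarrow M\otimes_{R} X\stackrel{f}\longrightarrow X$$
is exact if and only if $\ker(f)=0$, that is, if and only if $f$ is a monomorphism.

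For part (1): Theorem \ref{thm: 3.1} gives exactness of this sequence together with Gorenstein projectivity of $\coker(f)$. Exactness here means $f$ is monic, which is the claim.

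For part (2): if $f$ is monic, the sequence is exact. With $\coker(f)$ Gorenstein projective, Theorem \ref{thm: 3.2} applies and shows $(X,f)$ is Gorenstein projective over $R\ltimes_{n}M\cong T_{R}(M)$.

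The only point needing care is that Gorenstein projectivity, and the flat and projective dimensions of $\textbf{Z}(R)$, are invariant under the ring isomorphism. This holds because a ring isomorphism induces an equivalence of module categories that preserves projectives and exactness. The identification of $T_{R}(M)$-modules with $\Omega$ also goes through this isomorphism. Apart from that, no new argument is needed.
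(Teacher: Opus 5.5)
Your proposal is correct and follows the paper's own route. The paper likewise obtains Corollary \ref{cor: 3.7} directly from Theorems \ref{thm: 3.1} and \ref{thm: 3.2}. It uses $R\ltimes_{n}M\cong T_{R}(M)$ when $M^{\otimes_{R}n+1}=0$, and then the exactness condition reduces to $f$ being a monomorphism because $M^{\otimes_{R}n+1}\otimes_{R}X=0$.
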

\begin{cor} \label{cor: 3.8}Let $M$ be an $n$-nilpotent $R$-bimodule and $[Y,g]$ a left $T_{R}(M)$-module.
\begin{enumerate}\item If $fd(\textbf{Z}(R)_{T_{R}(M)})<\infty$, $pd(_{T_{R}(M)}\textbf{Z}(R))$ or $id(_{T_{R}(M)}\textbf{Z}(R))<\infty$, $[Y,g]$ is a Gorenstein injective left $T_{R}(M)$-module, then $g$ is an epimorphism and $\ker(g)$ is a Gorenstein injective left $R$-module.\item If $fd(M^{\otimes_{R}i}_{R})<\infty$, $pd(_{R}M^{\otimes_{R}i})<\infty$ for any $1\leq i\leq n$,  $g$ is an epimorphism and $\ker(g)$ is a Gorenstein injective left $R$-module, then $[Y,g]$  is a Gorenstein injective left $T_{R}(M)$-module.
\end{enumerate}
\end{cor}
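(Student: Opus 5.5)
This is a direct specialization of Theorem \ref{thm: 3.5} to $M^{\otimes_{R}n+1}=0$, in the same way Corollary \ref{cor: 3.7} specializes Theorems \ref{thm: 3.1} and \ref{thm: 3.2}. The plan is to identify $T_{R}(M)$ with $R\ltimes_{n}M$ and then show that the exactness condition in Theorem \ref{thm: 3.5} reduces to surjectivity of $g$.

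First, since $M$ is $n$-nilpotent, the ring isomorphism $R\ltimes_{n} M\cong T_{R}(M)/\coprod_{i=n+1}^{\infty}M^{\otimes_{R}i}$ becomes $R\ltimes_{n}M\cong T_{R}(M)$. We therefore identify left $T_{R}(M)$-modules with objects $[Y,g]$ of $\Upsilon$. Under this identification, the hypotheses on $\textbf{Z}(R)$ over $T_{R}(M)$ are exactly those of Theorem \ref{thm: 3.5}(1), and the hypotheses on $M^{\otimes_{R}i}$ are exactly those of Theorem \ref{thm: 3.5}(2).

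The key step is to rewrite the exactness condition. We need that
$$Y\stackrel{g}\longrightarrow \Hom_{R}(M,Y)\stackrel{{\rm Hom}_{R}^{n}(M,g)\cdots{\rm Hom}_{R}(M,g)}\longrightarrow \Hom^{n+1}_{R}(M,Y)$$
is exact precisely when $g$ is an epimorphism. By the adjunction isomorphism $\Hom_{R}^{n+1}(M,Y)\cong \Hom_{R}(M^{\otimes_{R}n+1},Y)$, the hypothesis $M^{\otimes_{R}n+1}=0$ gives $\Hom^{n+1}_{R}(M,Y)=0$. The second map is then zero, so its kernel is all of $\Hom_{R}(M,Y)$. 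Exactness at $\Hom_{R}(M,Y)$ therefore says exactly that $\im(g)=\Hom_{R}(M,Y)$, i.e. that $g$ is surjective.

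With this translation, part (1) follows from Theorem \ref{thm: 3.5}(1) and part (2) from Theorem \ref{thm: 3.5}(2). The only point needing care is the iterated-Hom adjunction identification, which is routine via the natural isomorphism $\Hom_{R}(M,\Hom_{R}^{k}(M,Y))\cong\Hom_{R}(M^{\otimes_{R}k+1},Y)$ applied inductively; this is the step I would verify explicitly.
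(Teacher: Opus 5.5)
Your proposal is correct and follows the paper's route: Corollary \ref{cor: 3.8} is obtained by specializing Theorem \ref{thm: 3.5} through the isomorphism $R\ltimes_{n}M\cong T_{R}(M)$ for $M^{\otimes_{R}n+1}=0$. Your observation that $\Hom^{n+1}_{R}(M,Y)\cong\Hom_{R}(M^{\otimes_{R}n+1},Y)=0$, so that exactness of the sequence is equivalent to surjectivity of $g$, is exactly the translation the paper uses without spelling it out.
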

\begin{rem} \label{rem: 3.9} {\rm  Let $M$ be an $n$-nilpotent $R$-bimodule such that $\Tor^{R}_{j}(M, M^{\otimes_{R}i})=0$ for any $i,j\geq 1$, $fd(M_{R})<\infty$ and $pd(_{R}M)<\infty$.

By \cite[Lemma 4.5]{CL}, $fd(M^{\otimes_{R}i}_{R})<\infty$ and $pd(_{R}M^{\otimes_{R}i})<\infty$ for any $1\leq i\leq n$. By \cite[Corollary 1.10(2)]{DLST} and \cite[Corollary 4.3]{M1}, $pd(_{T_{R}(M)}\textbf{T}(M))<\infty$ and $fd(\textbf{T}(M)_{T_{R}(M)})<\infty$.

Also, there is the exact sequence $0\rightarrow\textbf{T}(M)\rightarrow\textbf{T}(R)\rightarrow\textbf{Z}(R)\rightarrow 0$. So  $fd(\textbf{Z}(R)_{T_{R}(M)})<\infty$ and $pd(_{T_{R}(M)}\textbf{Z}(R))<\infty$. Thus \cite[Corollary  2.13]{DLST} is an immediate consequence of Corollary \ref{cor: 3.7}, \cite[Corollary  2.20]{DLST} is an immediate consequence of Corollary \ref{cor: 3.8}.}
\end{rem}
\bigskip
\section {\bf Gorenstein flat and $PGF$-modules over $n$-trivial extensions}
\bigskip
Recall that a left $R$-module $X$ is \emph{Gorenstein flat} \cite{EJT} if there is an exact sequence $\cdots\longrightarrow F^{-1}\longrightarrow
F^{0}\stackrel{f^{0}}\longrightarrow F^{1}\longrightarrow F^{2}\longrightarrow \cdots$  of flat left $R$-modules with $X\cong\ker(f^{0})$, which remains exact after applying $E\otimes_{R}-$ for any injective right $R$-module $E$.
\begin{thm} \label{thm: 4.1}Assume that   $fd(_{R\ltimes_{n}M}\textbf{Z}(R))<\infty$,  $fd(\textbf{Z}(R)_{R\ltimes_{n}M})$ or $id(\textbf{Z}(R)_{R\ltimes_{n}M})<\infty$. If $(X,f)$ is a Gorenstein flat left $R\ltimes_{n}M$-module, then the sequence  $M^{\otimes_{R}n+1}\otimes_{R} X\stackrel{(M\otimes f) \cdots(M^{\otimes_{R}n}\otimes f)}\longrightarrow  M\otimes_{R} X\stackrel{f}\longrightarrow X$   is exact and $\coker(f)$ is a Gorenstein flat left $R$-module.
\end{thm}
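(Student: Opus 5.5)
Following the template of Theorem \ref{thm: 3.1}, I take a complete flat resolution of $(X,f)$ and apply $\textbf{Z}(R)\otimes_{R\ltimes_{n}M}-$. Since $(X,f)$ is Gorenstein flat, there is an exact sequence $\Delta:\cdots\to F^{-1}\to F^{0}\stackrel{\varphi^{0}}\to F^{1}\to\cdots$ of flat left $R\ltimes_{n}M$-modules with $(X,f)\cong\ker(\varphi^{0})$, and $E\otimes_{R\ltimes_{n}M}\Delta$ is exact for every injective right $R\ltimes_{n}M$-module $E$.

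\emph{Step 1: $\textbf{Z}(R)\otimes\Delta$ is exact.} In the case $fd(\textbf{Z}(R)_{R\ltimes_{n}M})<\infty$, I use the flat analogue of \cite[Lemma 2.3]{EIT}: a right module of finite flat dimension tensors an acyclic complex of flats to an acyclic complex. This follows by dimension shifting on a finite flat resolution of $\textbf{Z}(R)$. In the case $id(\textbf{Z}(R)_{R\ltimes_{n}M})<\infty$, I take a finite injective coresolution $0\to\textbf{Z}(R)\to E^{0}\to\cdots\to E^{m}\to 0$. Each $E^{k}\otimes\Delta$ is exact by the definition of Gorenstein flatness, and each $F^{i}$ is flat, so tensoring gives a short exact sequence of complexes. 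Dimension shifting then gives exactness of $\textbf{Z}(R)\otimes\Delta$.

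Consequently $\Tor_{1}^{R\ltimes_{n}M}(\textbf{Z}(R),(X,f))=0$, and by \cite[Theorem 4.4]{M1} the displayed sequence $M^{\otimes_{R}n+1}\otimes_{R} X\to M\otimes_{R}X\to X$ is exact. Also, $\textbf{Z}(R)\otimes_{R\ltimes_{n}M}-\cong\textbf{C}$. Hence $\textbf{C}(\Delta)$ is an exact complex with $\coker(f)\cong\ker(\textbf{C}(\varphi^{0}))$, using the same kernel identification as in Theorem \ref{thm: 3.1}.

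\emph{Step 2: each $\textbf{C}(F^{i})$ is a flat left $R$-module.} Write each flat module $F$ as a direct limit of finitely generated free modules $\textbf{T}(R^{m})$. Since $\textbf{C}$ is a left adjoint, it commutes with direct limits, and $\textbf{C}\textbf{T}=id$. Therefore $\textbf{C}(F)$ is a direct limit of free $R$-modules, hence flat.

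\emph{Step 3: $I\otimes_{R}\textbf{C}(\Delta)$ is exact for every injective right $R$-module $I$.} This is the step I expect to be the main obstacle, and it is where $fd(_{R\ltimes_{n}M}\textbf{Z}(R))<\infty$ is used. First, $I\otimes_{R}\textbf{C}(\Delta)\cong (I\otimes_{R}\textbf{Z}(R))\otimes_{R\ltimes_{n}M}\Delta\cong \textbf{Z}(I)\otimes_{R\ltimes_{n}M}\Delta$, where $\textbf{Z}(I)$ is $I$ viewed as a right $R\ltimes_{n}M$-module via the projection onto $R$. It therefore suffices to show that $\textbf{Z}(I)\otimes\Delta$ is exact.

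Dualizing, $(\textbf{Z}(I)\otimes\Delta)^{+}\cong\Hom_{R\ltimes_{n}M}(\Delta,\textbf{Z}(I)^{+})=\Hom_{R\ltimes_{n}M}(\Delta,\textbf{Z}(I^{+}))$. Since $I^{+}$ is a flat left $R$-module, $\textbf{Z}(I^{+})$ is a direct limit of copies of $\textbf{Z}(R)^{m}$ by Lazard's theorem and the fact that $\textbf{Z}$ preserves direct limits. The condition $fd(_{R\ltimes_{n}M}\textbf{Z}(R))<\infty$ then gives $fd(\textbf{Z}(I^{+}))<\infty$.

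Next, the flat module of finite flat dimension $\textbf{Z}(I^{+})$ can be handled by the character-module trick. A left module $L$ of finite flat dimension has $L^{+}$ of finite injective dimension as a right module. Hence $\textbf{Z}(I^{+})^{+}\cong\textbf{Z}(I^{++})$ has finite injective dimension. By the injective dimension-shifting argument of Step 1, $\textbf{Z}(I^{++})\otimes\Delta$ is exact.

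Finally, $I$ is a pure submodule, indeed a direct summand since it is injective, of $I^{++}$. So $\textbf{Z}(I)$ is a direct summand of $\textbf{Z}(I^{++})$, and therefore $\textbf{Z}(I)\otimes\Delta$ is exact. If this route stalls, I would instead cite the known lemma that $\Delta$ stays exact under $L\otimes-$ for any right module $L$ of finite injective dimension and apply it directly to $\textbf{Z}(I^{++})$.

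Combining Steps 1 to 3, $\textbf{C}(\Delta)$ is an exact complex of flat left $R$-modules that remains exact under $I\otimes_{R}-$ for every injective right $R$-module $I$, with $\coker(f)\cong\ker(\textbf{C}(\varphi^{0}))$. Hence $\coker(f)$ is Gorenstein flat.
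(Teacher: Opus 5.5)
Steps 1 and 2 are fine. So are the $\Tor_{1}$-vanishing argument with \cite[Theorem 4.4]{M1} and the identification $I\otimes_{R}\textbf{C}(\Delta)\cong\textbf{Z}(I)\otimes_{R\ltimes_{n}M}\Delta$. Your Lazard/left-adjoint argument for the flatness of $\textbf{C}(F^{i})$ is a valid substitute for the paper's citation of \cite[Theorem 2.10]{M1}.

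The gap is in Step 3, at the claim that $I^{+}$ is a flat left $R$-module for every injective right $R$-module $I$. This holds when $R$ is right coherent, and in fact it characterizes right coherent rings. No coherence is assumed in this theorem; compare Theorem \ref{thm: 4.2}, where coherence is imposed precisely so that character-module arguments are available. Without flatness of $I^{+}$ you have no bound on $fd(\textbf{Z}(I^{+}))$, hence none on $id(\textbf{Z}(I^{++}))$. Your fallback also needs $id(\textbf{Z}(I^{++}))<\infty$, so it does not close the gap either.

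The missing idea is that $id(\textbf{Z}(I)_{R\ltimes_{n}M})<\infty$ follows directly from $fd(_{R\ltimes_{n}M}\textbf{Z}(R))<\infty$ by adjunction, with no character modules at all. Regard $\textbf{Z}(R)$ as an $(R\ltimes_{n}M,R)$-bimodule. Then $\textbf{Z}(I)\cong\Hom_{R}(\textbf{Z}(R),I)$ as right $R\ltimes_{n}M$-modules. Since $I$ is injective, for every right $R\ltimes_{n}M$-module $N$ and every $k\geq 0$,
\[
\Ext^{k}_{R\ltimes_{n}M}(N,\textbf{Z}(I))\cong\Hom_{R}\bigl(\Tor_{k}^{R\ltimes_{n}M}(N,\textbf{Z}(R)),I\bigr).
\]
Hence $id(\textbf{Z}(I)_{R\ltimes_{n}M})\leq fd(_{R\ltimes_{n}M}\textbf{Z}(R))<\infty$. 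Your injective dimension-shifting argument from Step 1, applied to $\textbf{Z}(I)$ itself, then shows that $\textbf{Z}(I)\otimes_{R\ltimes_{n}M}\Delta$ is exact. This is exactly the paper's one-line step, and the dualization and the detour through $I^{++}$ can be dropped.
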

\begin{proof}There is an exact sequence of flat left  $R\ltimes_{n}M$-modules $$\Delta: \cdots\longrightarrow(F^{-1},h^{-1})\longrightarrow
(F^{0},h^{0})\stackrel{\varphi^{0}}\longrightarrow (F^{1},h^{1})\longrightarrow (F^{2},h^{2})\longrightarrow \cdots$$
with $(X,f)\cong\ker(\varphi^{0})$, which remains exact after applying $E\otimes_{R\ltimes_{n}M}-$ for any injective right $R\ltimes_{n}M$-module $E$.

Since   $fd(\textbf{Z}(R)_{R\ltimes_{n}M})$ or $id(\textbf{Z}(R)_{R\ltimes_{n}M})<\infty$, $\textbf{Z}(R)\otimes_{R\ltimes_{n}M}\Delta$ is exact. Note that $\textbf{Z}(R)\otimes_{R\ltimes_{n}M} (F^{i},h^{i})\cong  \coker(h^{i})$ is flat by \cite[Theorem 2.10]{M1}. Hence
we get the exact sequence of flat left $R$-modules
$$\textbf{C}(\Delta): \cdots\longrightarrow \coker(h^{-1})\longrightarrow \coker(h^{0})\stackrel{\textbf{C}(\varphi^{0})}\longrightarrow \coker(h^{1})\longrightarrow \coker(h^{2})\longrightarrow \cdots$$ with
$\coker(f)\cong\ker(\textbf{C}(\varphi^{0}))$.

Let $Q$ be an injective right $R$-module.  Then $id(\textbf{Z}(Q)_{R\ltimes_{n}M})<\infty$ since $fd(_{R\ltimes_{n}M}\textbf{Z}(R))<\infty$. Thus $Q\otimes_{R}\textbf{C}(\Delta)\cong \textbf{Z}(Q)\otimes_{R\ltimes_{n}M}\Delta$ is exact. This implies that  $\coker(f)$ is a Gorenstein  flat left $R$-module.

Since $\textbf{Z}(R)\otimes_{R\ltimes_{n}M}\Delta$ is exact,  $\Tor^{R\ltimes_{n}M}_{1}(\textbf{Z}(R), (X,f))=0$. So the sequence  $M^{\otimes_{R}n+1}\otimes_{R} X\stackrel{(M\otimes f) \cdots(M^{\otimes_{R}n}\otimes f)}\longrightarrow  M\otimes_{R} X\stackrel{f}\longrightarrow X$  is exact by \cite[Theorem 4.4]{M1}.
\end{proof}
Recall that $R$ is  a \emph{right coherent ring} \cite{L} if every finitely generated  right ideal of $R$ is finitely presented.
\begin{thm} \label{thm: 4.2}Let  $R\ltimes_{n}M$ be a right coherent ring,  $fd(_{R}M^{\otimes_{R}i})<\infty$ and $pd(M^{\otimes_{R}i}_{R})<\infty$ for any $1\leq i\leq n$. If $(X,f)$ is a left $R\ltimes_{n}M$-module such that the sequence  $M^{\otimes_{R}n+1}\otimes_{R} X\stackrel{(M\otimes f) \cdots(M^{\otimes_{R}n}\otimes f)}\longrightarrow  M\otimes_{R} X\stackrel{f}\longrightarrow X$   is exact and $\coker(f)$ is a Gorenstein flat left $R$-module, then $(X,f)$ is a Gorenstein flat left $R\ltimes_{n}M$-module.
\end{thm}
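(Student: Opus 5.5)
Since $R\ltimes_{n}M$ is right coherent, the plan is to reduce the statement to the Gorenstein injective case, Theorem \ref{thm: 3.5}(2) applied on the right side, via character modules. Over a right coherent ring $S$, a left $S$-module $N$ is Gorenstein flat if and only if $N^{+}$ is a Gorenstein injective right $S$-module (Holm). So it suffices to show that $(X,f)^{+}$ is a Gorenstein injective right $R\ltimes_{n}M$-module.

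\emph{Step 1 (identify the character module).} Under the right-module version of the equivalence $R\ltimes_{n}M$-Mod $\simeq\Upsilon$, the module $(X,f)^{+}$ corresponds to the couple $[X^{+},g]$. Here $g: X^{+}\to \Hom_{R}(M,X^{+})\cong (M\otimes_{R}X)^{+}$ is $f^{+}$, composed with the adjunction isomorphism. Iterating, the $\Upsilon$-structure maps of $X^{+}$ are $(M^{\otimes_{R}j}\otimes f)^{+}$ under $\Hom^{j}_{R}(M,X^{+})\cong (M^{\otimes_{R}j}\otimes_{R}X)^{+}$. This naturality check is routine but should be written out.

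\emph{Step 2 (transfer the hypotheses).} Since $(-)^{+}$ is exact and faithful, applying it to the exact sequence $M^{\otimes_{R}n+1}\otimes_{R} X\to M\otimes_{R}X\stackrel{f}\to X$ gives exactness of the right-module analogue of
$$X^{+}\stackrel{g}\longrightarrow \Hom_{R}(M,X^{+})\longrightarrow \Hom^{n+1}_{R}(M,X^{+}).$$
Moreover $\ker(g)\cong\ker(f^{+})\cong \coker(f)^{+}$. Because $\coker(f)$ is Gorenstein flat over $R$, its character module is Gorenstein injective over $R$ by Holm's theorem; that direction holds over any ring.

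\emph{Step 3 (apply Theorem \ref{thm: 3.5}(2) for right modules).} The right-module version of Theorem \ref{thm: 3.5}(2) requires $fd({}_{R}M^{\otimes_{R}i})<\infty$ and $pd(M^{\otimes_{R}i}_{R})<\infty$ for $1\le i\le n$. These are exactly the hypotheses of the present theorem, with the sides swapped as they should be. That theorem then gives that $[X^{+},g]=(X,f)^{+}$ is Gorenstein injective, and by right coherence of $R\ltimes_{n}M$ we conclude that $(X,f)$ is Gorenstein flat.

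\emph{Main obstacle.} I expect the main obstacle to be Step 3. Theorem \ref{thm: 3.5}(2) is only claimed via a ``dual argument'', so one must verify that its proof, a dual Horseshoe construction with $\textbf{H}$ in place of $\textbf{T}$, truly uses only those side-swapped finiteness conditions. In that construction, $\Hom_{R}(M^{\otimes i},-)$ preserves exactness of the complete injective resolution because $fd({}_{R}M^{\otimes i})<\infty$ makes $\Hom(M^{\otimes i},E)$ of finite injective dimension. The test condition reduces, via the dual of Lemma \ref{lem: 3.4} filtrations of $\textbf{H}(E)$, to $\Hom_{R}(M^{\otimes i}\otimes E',\Theta)$, which is exact because $pd(M^{\otimes_{R}i}_{R})<\infty$. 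If this dual bookkeeping fails, the fallback is a direct argument: build a complete flat resolution $\textbf{T}(F^{\bullet})$ by repeating the proof of Theorem \ref{thm: 3.2} with flat modules. For that we need $M\otimes_{R}-$ to preserve exactness of the flat complex, which again uses finite flat dimension on the appropriate side. We would then test against injectives $E$ using the right-module filtration of $\textbf{H}(E)$ from Lemma \ref{lem: 3.4}.
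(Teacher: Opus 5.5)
Your proposal is correct and follows the paper's proof. You identify $(X,f)^{+}$ with $[X^{+},\omega f^{+}]$, observe that $\ker(\omega f^{+})\cong(\coker f)^{+}$ is Gorenstein injective, apply the right-module version of Theorem~\ref{thm: 3.5}(2) (whose side-swapped hypotheses are exactly those of the present theorem), and conclude by Holm's character-module criterion over the right coherent ring $R\ltimes_{n}M$. The paper first notes that $R$ is right coherent before invoking Holm for $\coker(f)$; as you observe, that is unnecessary for this direction.

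One correction to your side remark on verifying Theorem~\ref{thm: 3.5}(2): you have the roles of the two hypotheses swapped. Exactness of $\Hom_{R}(M^{\otimes_{R}i},\Theta)$ comes from $pd(M^{\otimes_{R}i}_{R})<\infty$. The condition $fd({}_{R}M^{\otimes_{R}i})<\infty$ instead gives finite injective dimension of $\Hom_{R}(M^{\otimes_{R}i},E)$. That finiteness is what the $\Ext$-vanishing needs, and also what the test needs: via the filtration of $\textbf{H}(E)$ by the $\textbf{Z}(\Hom^{i}_{R}(M,E))$, the test reduces to exactness of $\Hom_{R}(\Hom^{i}_{R}(M,E),\Theta)$, not of $\Hom_{R}(M^{\otimes_{R}i}\otimes E,\Theta)$.
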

\begin{proof}Note that $R$ is also a right  coherent ring by \cite[Theorem 3.1]{M1}.

We write $\omega: (M\otimes_{R} X)^{+}\rightarrow\Hom_{R}(M, X^{+})$ to be the natural isomorphism. Then the sequence $X^{+}\stackrel{\omega f^{+}}\longrightarrow \Hom_{R}(M,X^{+})\stackrel{{\rm Hom}_{R}^{n}(M,\omega f^{+})\cdots{\rm Hom}_{R}(M,\omega f^{+})}\longrightarrow \Hom^{n+1}_{R}(M,X^{+})$  is   exact and $\ker(\omega f^{+})\cong (\coker(f))^{+}$ is a Gorenstein injective right $R$-module by \cite[Theorem 3.6]{H}. Thus $(X,f)^{+}\cong[X^{+},\omega f^{+}]$  is a Gorenstein injective right $R\ltimes_{n}M$-module by Theorem \ref{thm: 3.5}. So $(X,f)$ is a Gorenstein flat left $R\ltimes_{n}M$-module by \cite[Theorem 3.6]{H}.
\end{proof}
\begin{cor}\label{cor: 4.3}Assume that $fd(_{R}M^{\otimes_{R}i})<\infty$, $fd(M^{\otimes_{R}i}_{R})<\infty$ for any $1\leq i\leq n$, $fd(_{R\ltimes_{n}M}\textbf{Z}(R))<\infty$, $fd(\textbf{Z}(R)_{R\ltimes_{n}M})$ or $id(\textbf{Z}(R)_{R\ltimes_{n}M})<\infty$. Then
\begin{enumerate}\item $\textbf{T}(X)$ is a Gorenstein flat left $R\ltimes_{n}M$-module if and only if  $X$ is a Gorenstein flat left $R$-module.\item $\textbf{Z}(X)$ is a Gorenstein flat left $R\ltimes_{n}M$-module if and only if $M\otimes_R X =0$ and $X$ is a Gorenstein flat left $R$-module.
\end{enumerate}
\end{cor}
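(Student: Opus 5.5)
The plan is to get the two ``only if'' directions from Theorem \ref{thm: 4.1} and to prove the two ``if'' directions by hand. Theorem \ref{thm: 4.2} is not usable for the converse, since it needs right coherence of $R\ltimes_{n}M$ and $pd(M^{\otimes_{R}i}_{R})<\infty$, neither of which is assumed here.

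\emph{Necessity.} The hypotheses $fd(_{R\ltimes_{n}M}\textbf{Z}(R))<\infty$ and ($fd$ or $id$ of $\textbf{Z}(R)_{R\ltimes_{n}M}$ finite) are exactly those of Theorem \ref{thm: 4.1}.
\begin{enumerate}
\item Write $\textbf{T}(X)=(\coprod_{i=0}^{n}M^{\otimes_{R}i}\otimes_{R}X,\mu)$. Then $\coker(\mu)\cong X$ (equivalently $\textbf{C}\textbf{T}=id$). So if $\textbf{T}(X)$ is Gorenstein flat, Theorem \ref{thm: 4.1} shows that $X$ is Gorenstein flat.
\item For $\textbf{Z}(X)=(X,0)$ we have $f=0$, so both maps in the sequence $M^{\otimes_{R}n+1}\otimes_{R}X\to M\otimes_{R}X\to X$ are zero. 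Its exactness, given by Theorem \ref{thm: 4.1}, forces $M\otimes_{R}X=0$. Also $\coker(0)=X$ is Gorenstein flat.
\end{enumerate}

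\emph{Sufficiency in (1).} Let $\Xi:\cdots\to F^{-1}\to F^{0}\stackrel{g^{0}}\to F^{1}\to\cdots$ be a complete flat resolution with $X\cong\ker(g^{0})$, and apply $\textbf{T}$.
\begin{enumerate}
\item Each $\textbf{T}(F^{k})\cong(R\ltimes_{n}M)\otimes_{R}F^{k}$ is a flat $R\ltimes_{n}M$-module.
\item $\textbf{T}(\Xi)$ is exact with kernel $\textbf{T}(X)$. This reduces to showing that each $M^{\otimes_{R}i}\otimes_{R}\Xi$ is exact for $1\le i\le n$; the kernel then follows from right exactness of $M^{\otimes_{R}i}\otimes_{R}-$. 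For the exactness I use $fd(M^{\otimes_{R}i}_{R})<\infty$ together with the fact that all cycles of $\Xi$ are Gorenstein flat. Take a finite flat resolution of $M^{\otimes_{R}i}_{R}$; the $F^{k}$ are flat, so tensoring it with $\Xi$ gives a short exact sequence of complexes at each step. Dimension shifting along that resolution then propagates exactness from flat modules, for which $F'\otimes_{R}\Xi$ is exact.
\item $E\otimes_{R\ltimes_{n}M}\textbf{T}(\Xi)$ is exact for every injective right $R\ltimes_{n}M$-module $E$. First, $E\otimes_{R\ltimes_{n}M}\textbf{T}(\Xi)\cong E\otimes_{R}\Xi$, with $E$ regarded as a right $R$-module by restriction.
\item Next, as a left $R$-module $R\ltimes_{n}M=\coprod_{i=0}^{n}M^{\otimes_{R}i}$ has finite flat dimension because $fd(_{R}M^{\otimes_{R}i})<\infty$. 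By the adjunction $\Ext_{R}^{j}(-,E)\cong\Ext^{j}_{R\ltimes_{n}M}(-\otimes_{R}(R\ltimes_{n}M),E)$, this gives $id(E_{R})<\infty$.
\item Finally, take a finite injective coresolution $0\to E\to I^{0}\to\cdots\to I^{m}\to 0$ over $R$. Tensoring with the flat complex $\Xi$ gives short exact sequences of complexes, and each $I^{j}\otimes_{R}\Xi$ is exact by the Gorenstein flatness of $X$. Induction on $m$ then shows $E\otimes_{R}\Xi$ is exact.
\end{enumerate}
Hence $\textbf{T}(X)$ is Gorenstein flat.

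\emph{Sufficiency in (2).} If $M\otimes_{R}X=0$, then inductively $M^{\otimes_{R}i}\otimes_{R}X=M^{\otimes_{R}i-1}\otimes_{R}(M\otimes_{R}X)=0$ for all $i\ge1$. Therefore $\textbf{T}(X)=(X,0)=\textbf{Z}(X)$, and (2) follows from (1).

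The main obstacle is the second step of the sufficiency argument for (1): exactness of $M^{\otimes_{R}i}\otimes_{R}\Xi$. This requires $\Tor$-vanishing between right modules of finite flat dimension and Gorenstein flat modules, and I would obtain it by the dimension shifting sketched there.
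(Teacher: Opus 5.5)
Your proof is correct and follows the paper's route. Necessity comes from Theorem 4.1; sufficiency applies $\textbf{T}$ to a complete flat resolution and tests against injectives through modules of finite injective dimension; and (2) uses $\textbf{T}(X)=\textbf{Z}(X)$. The one variation is that you bound $id(E_{R})$ for an arbitrary injective $E$ by change of rings, whereas the paper writes $E\cong\textbf{H}(Q)$ and uses $id(\Hom^{i}_{R}(M,Q)_{R})<\infty$.

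You do need to fix the formula you cite at that point. The isomorphism $\Ext^{j}_{R}(-,E)\cong\Ext^{j}_{R\ltimes_{n}M}(-\otimes_{R}(R\ltimes_{n}M),E)$ is false in general: it would force $E_{R}$ to be injective. The correct statement for injective $E$ is $\Ext^{j}_{R}(N,E)\cong\Hom_{R\ltimes_{n}M}(\Tor^{R}_{j}(N,R\ltimes_{n}M),E)$, which vanishes for $j>fd({}_{R}(R\ltimes_{n}M))$ and gives exactly the bound you want.

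Also, the dimension shift in your second step works for any exact complex of flat modules. It needs no Gorenstein flatness of the cycles, so that step is not really the obstacle you describe.
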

\begin{proof}(1) $``\Rightarrow"$ It follows from Theorem \ref{thm: 4.1}.

$``\Leftarrow"$ There is an exact sequence of flat left $R$-modules $$\Xi: \cdots\longrightarrow
F^{-1}\longrightarrow F^{0}\stackrel{g^{0}}\longrightarrow F^{1}\longrightarrow F^{2}\longrightarrow \cdots$$  such that $X\cong\ker(g^{0})$ and $E\otimes_{R}-$ leaves the sequence exact for any injective right $R$-module $E$. Since $fd(M^{\otimes_{R}i}_{R})<\infty$ for any $1\leq i\leq n$, we get the exact sequence of flat left  $R\ltimes_{n}M$-modules $$\Delta: \cdots\longrightarrow \textbf{T}(F^{-1})\longrightarrow
\textbf{T}(F^{0})\stackrel{\textbf{T}(g^{0})}\longrightarrow \textbf{T}(F^{1})\longrightarrow \textbf{T}(F^{2})\longrightarrow \cdots$$
with $\textbf{T}(X)\cong\ker(\textbf{T}(g^{0}))$.

By \cite[Theorem 2.9]{M1}, an injective right $R\ltimes_{n}M$-module is isomorphic to $\textbf{H}(Q)$ with $Q$  an injective right $R$-module. Since $fd(_{R}M^{\otimes_{R}i})<\infty$ for any $1\leq i\leq n$, $id(\Hom_{R}^{i}(M,Q)_{R})<\infty$. Thus $\textbf{H}(Q)\otimes_{R\ltimes_{n}M}\Delta\cong\coprod_{i=0}^{n}\Hom_{R}^{i}(M,Q)\otimes_{R}\Xi$ is exact by \cite[Lemma 4.1(1)]{M1}. So $\textbf{T}(X)$ is a Gorenstein flat left $R\ltimes_{n}M$-module.

(2) follows from (1) and Theorem \ref{thm: 4.1}.
\end{proof}
Recall that a left $R$-module $X$ is a \emph{projectively coresolved Gorenstein flat module} (\emph{$PGF$ module}, for short) \cite{SS} if there is an exact sequence $\cdots\longrightarrow P^{-1}\longrightarrow P^{0}\stackrel{h^{0}}\longrightarrow P^{1}\longrightarrow P^{2}\longrightarrow\cdots$ of projective left $R$-modules such that $X\cong\ker(h^{0})$ and $E\otimes_{R}-$ leaves the sequence exact for any injective right $R$-module $E$. The $PGF$ modules played a crucial role in \v{S}aroch and \v{S}\'{t}ov\'{\i}\v{c}ek's proof that the cotorsion pair generated by the Gorenstein flat modules is complete. These modules are simultaneously Gorenstein projective and
Gorenstein flat. Thus $PGF$ modules in Gorenstein homological algebra may be viewed as the role of projective modules in classical  homological algebra and have been studied by many authors \cite{E,I,SS}.
\begin{thm} \label{thm: 4.4}Assume that  $fd(_{R\ltimes_{n}M}\textbf{Z}(R))<\infty$,  $fd(\textbf{Z}(R)_{R\ltimes_{n}M})$ or $id(\textbf{Z}(R)_{R\ltimes_{n}M})<\infty$. If $(X,f)$ is a $PGF$ left $R\ltimes_{n}M$-module, then the sequence  $M^{\otimes_{R}n+1}\otimes_{R} X\stackrel{(M\otimes f) \cdots(M^{\otimes_{R}n}\otimes f)}\longrightarrow  M\otimes_{R} X\stackrel{f}\longrightarrow X$   is exact and $\coker(f)$ is a $PGF$ left $R$-module.
\end{thm}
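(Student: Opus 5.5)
The plan is to combine the proof of Theorem \ref{thm: 3.1} with that of Theorem \ref{thm: 4.1}: we work with a complex of projectives, but test it against injective right modules.

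Since $(X,f)$ is $PGF$, there is an exact sequence of projective left $R\ltimes_{n}M$-modules
$$\Delta: \cdots\longrightarrow \textbf{T}(P^{-1})\longrightarrow \textbf{T}(P^{0})\stackrel{\varphi^{0}}\longrightarrow \textbf{T}(P^{1})\longrightarrow\cdots$$
with $(X,f)\cong\ker(\varphi^{0})$. Here each projective has been written as $\textbf{T}(P^{i})$ with $P^{i}$ projective over $R$, by \cite[Theorem 2.8]{M1}. Moreover, $E\otimes_{R\ltimes_{n}M}\Delta$ is exact for every injective right $R\ltimes_{n}M$-module $E$.

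First we show that $\textbf{Z}(R)\otimes_{R\ltimes_{n}M}\Delta$ is exact, and we treat the two hypotheses separately.
\begin{enumerate}
\item If $fd(\textbf{Z}(R)_{R\ltimes_{n}M})<\infty$, this follows from \cite[Lemma 2.3]{EIT}, exactly as in Theorem \ref{thm: 3.1}.
\item If $id(\textbf{Z}(R)_{R\ltimes_{n}M})<\infty$, we use a finite injective coresolution of $\textbf{Z}(R)$ and dimension shifting. The point is that $E\otimes\Delta$ is exact for every injective $E$, and $\Delta$ consists of flat modules, so the tensor functor is exact on short exact sequences of the first argument. This is the same argument already used in Theorem \ref{thm: 4.1}.
\end{enumerate}
Since $\textbf{Z}(R)\otimes_{R\ltimes_{n}M}\textbf{T}(P^{i})\cong\textbf{C}\textbf{T}(P^{i})\cong P^{i}$, we obtain an exact complex of projective left $R$-modules
$$\textbf{C}(\Delta): \cdots\to P^{-1}\to P^{0}\to P^{1}\to\cdots.$$
Right exactness of $\textbf{C}$ then gives $\coker(f)\cong\ker(\textbf{C}(\varphi^{0}))$.

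Next let $Q$ be an injective right $R$-module. Since $fd(_{R\ltimes_{n}M}\textbf{Z}(R))<\infty$, the character-module duality shows that $\textbf{Z}(Q)$, viewed as a right $R\ltimes_{n}M$-module, has finite injective dimension; this is the same step as in Theorem \ref{thm: 4.1}. Taking a finite injective coresolution of $\textbf{Z}(Q)$ and dimension shifting as above, $\textbf{Z}(Q)\otimes_{R\ltimes_{n}M}\Delta$ is exact. We also have $Q\otimes_{R}\textbf{C}(\Delta)\cong\textbf{Z}(Q)\otimes_{R\ltimes_{n}M}\Delta$, because $\textbf{Z}(Q)\otimes_{R\ltimes_{n}M}\textbf{T}(P)\cong Q\otimes_{R}P$ naturally. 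Hence $Q\otimes_{R}\textbf{C}(\Delta)$ is exact, and $\coker(f)$ is a $PGF$ left $R$-module.

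Finally, exactness of $\textbf{Z}(R)\otimes_{R\ltimes_{n}M}\Delta$ gives $\Tor_{1}^{R\ltimes_{n}M}(\textbf{Z}(R),(X,f))=0$, because $\Delta$ is a flat (indeed projective) resolution to the left of $(X,f)$. Then \cite[Theorem 4.4]{M1} yields exactness of
$$M^{\otimes_{R}n+1}\otimes_{R} X\longrightarrow M\otimes_{R} X\stackrel{f}\longrightarrow X.$$

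I expect the main obstacle to be the step in the $id(\textbf{Z}(R))<\infty$ case and the finite-injective-dimension claim for $\textbf{Z}(Q)$. Both are essentially the arguments already invoked in Theorem \ref{thm: 4.1}, so I would cite them in the same way. The one thing to check there is the natural isomorphism $\textbf{Z}(Q)\otimes\textbf{T}(P)\cong Q\otimes_{R}P$, which follows from the right-module version of the adjunction $\textbf{C}\dashv\textbf{Z}$.
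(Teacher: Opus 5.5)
Your proposal is correct and follows the paper's proof step for step. The steps are: exactness of $\textbf{Z}(R)\otimes_{R\ltimes_{n}M}\Delta$ under either hypothesis; passage to the exact complex $\textbf{C}(\Delta)$ of projectives with $\coker(f)\cong\ker(\textbf{C}(\varphi^{0}))$; testing against $\textbf{Z}(Q)$, which has finite injective dimension; and the vanishing of $\Tor_{1}^{R\ltimes_{n}M}(\textbf{Z}(R),(X,f))$ to get the exact sequence. Your dimension-shifting arguments fill in steps the paper only asserts. For $id(\textbf{Z}(Q)_{R\ltimes_{n}M})<\infty$, the cleanest justification is $\textbf{Z}(Q)\cong\Hom_{R}(\textbf{Z}(R),Q)$ together with $\Ext^{i}_{R\ltimes_{n}M}(N,\Hom_{R}(\textbf{Z}(R),Q))\cong\Hom_{R}(\Tor_{i}^{R\ltimes_{n}M}(N,\textbf{Z}(R)),Q)$ for injective $Q$; this is what your appeal to character-module duality amounts to.
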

\begin{proof}There is an exact sequence of projective left  $R\ltimes_{n}M$-modules $$\Delta: \cdots\longrightarrow \textbf{T}(P^{-1})\longrightarrow
\textbf{T}(P^{0})\stackrel{\varphi^{0}}\longrightarrow \textbf{T}(P^{1})\longrightarrow \textbf{T}(P^{2})\longrightarrow \cdots$$
with $(X,f)\cong\ker(\varphi^{0})$, which remains exact after applying $E\otimes_{R\ltimes_{n}M}-$ for any injective right $R\ltimes_{n}M$-module $E$.
Since  $fd(\textbf{Z}(R)_{R\ltimes_{n}M})$ or $id(\textbf{Z}(R)_{R\ltimes_{n}M})<\infty$, $\textbf{Z}(R)\otimes_{R\ltimes_{n}M}\Delta$ is exact. Applying the fuctor $\textbf{Z}(R)\otimes_{R\ltimes_{n}M}-$ to the above exact sequence $\Delta$, we get the exact sequence of projective left $R$-modules
$$\textbf{C}(\Delta): \cdots\longrightarrow P^{-1}\longrightarrow P^{0}\stackrel{\textbf{C}(\varphi^{0})}\longrightarrow P^{1}\longrightarrow P^{2}\longrightarrow \cdots$$ with
$\coker(f)\cong\ker(\textbf{C}(\varphi^{0}))$.

Let $Q$ be an injective right $R$-module.  Then $id(\textbf{Z}(Q)_{R\ltimes_{n}M})<\infty$ since $fd(_{R\ltimes_{n}M}\textbf{Z}(R))<\infty$. Thus $Q\otimes_{R}\textbf{C}(\Delta)\cong \textbf{Z}(Q)\otimes_{R\ltimes_{n}M}\Delta$ is exact. So $\coker(f)$ is a $PGF$ left $R$-module.

Since $\textbf{Z}(R)\otimes_{R\ltimes_{n}M}\Delta$ is exact,  $\Tor^{R\ltimes_{n}M}_{1}(\textbf{Z}(R), (X,f))=0$. So the sequence  $M^{\otimes_{R}n+1}\otimes_{R} X\stackrel{(M\otimes f) \cdots(M^{\otimes_{R}n}\otimes f)}\longrightarrow  M\otimes_{R} X\stackrel{f}\longrightarrow X$  is exact by \cite[Theorem 4.4]{M1}.
\end{proof}
\begin{thm}\label{thm: 4.5}Assume that  $fd(_{R}M^{\otimes_{R}i})<\infty$ and $fd(M^{\otimes_{R}i}_{R})<\infty$ for any $1\leq i\leq n$. If $(X,f)$ is a left $R\ltimes_{n}M$-module such that the sequence  $M^{\otimes_{R}n+1}\otimes_{R} X\stackrel{(M\otimes f) \cdots(M^{\otimes_{R}n}\otimes f)}\longrightarrow  M\otimes_{R} X\stackrel{f}\longrightarrow X$   is exact and $\coker(f)$ is a $PGF$ left $R$-module, then $(X,f)$ is a $PGF$ left $R\ltimes_{n}M$-module.
\end{thm}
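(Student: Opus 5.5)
The plan is to repeat the construction in the proof of Theorem \ref{thm: 3.2} essentially verbatim, and to replace only the final ``Hom-exactness'' step by a ``tensor-exactness'' step against injective right $R\ltimes_{n}M$-modules.

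\emph{Step 1: build the complete resolution.} Since $\coker(f)$ is $PGF$, there is an exact complex of projective left $R$-modules
$$\Xi:\ \cdots\rightarrow P^{-1}\stackrel{g^{-1}}\rightarrow P^{0}\stackrel{g^{0}}\rightarrow P^{1}\rightarrow\cdots$$
with $\coker(f)\cong\ker(g^{0})$ and $E\otimes_{R}\Xi$ exact for every injective right $R$-module $E$. As in Theorem \ref{thm: 3.2}, let $f_{j}$, $\pi_{j}$ and $\rho_{ji}$ be as defined there, and use the exact sequences $0\rightarrow M\otimes_{R}\coker(f_{j-1})\stackrel{\alpha_{j}}\rightarrow\coker(f_{j})\rightarrow\coker(f)\rightarrow0$ of \cite[Lemma 2.7(1)]{M1}.

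\emph{Step 2: glue the pieces together.} The exactness of $M^{\otimes_{R}i}\otimes_{R}\Xi$ follows from $fd(M^{\otimes_{R}i}_{R})<\infty$, via \cite[Lemma 2.3]{EIT}. We then apply the generalized Horseshoe Lemma \cite[Lemma 1.6]{Z} iteratively, to the left half and to the right half of $\Xi$. This produces an exact complex
$$\Delta:\ \cdots\rightarrow\textbf{T}(P^{-1})\rightarrow\textbf{T}(P^{0})\stackrel{h^{0}_{n+1}}\rightarrow\textbf{T}(P^{1})\rightarrow\cdots$$
of projective $R\ltimes_{n}M$-modules, with $(X,f)\cong\ker(h^{0}_{n+1})$, exactly as before.

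\emph{The main obstacle.} In Theorem \ref{thm: 3.2}, the only place where $pd(_{R}M^{\otimes_{R}i})<\infty$ was used was to lift maps across $\ker(g^{k})$: we needed $\Ext^{1}_{R}(\coker(f),M^{\otimes_{R}i}\otimes_{R}P^{0})=0$ in order to extend along $\alpha_{j}$. We must now obtain the same vanishing from $fd(_{R}M^{\otimes_{R}i})<\infty$. The point is that $M^{\otimes_{R}i}\otimes_{R}P^{k}$ is then a module of finite flat dimension. For a $PGF$ module $G$ and a module $F$ of finite flat dimension, $\Ext^{1}_{R}(G,F)=0$. To see this, note that $\Ext^{\geq1}_{R}(G,F)=0$ for flat $F$ (by \cite{SS}, PGF modules are left orthogonal to flat modules, indeed to flat cotorsion and flat modules). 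Dimension shifting along a finite flat resolution of $F$ then reduces the claim to the flat case, because syzygies of $G$ in $\Xi$ are again $PGF$. With this vanishing in hand, the liftings $\psi^{j-1}_{j}$ exist, and the construction goes through unchanged.

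\emph{Step 3: check tensor-exactness.} By \cite[Theorem 2.9]{M1}, every injective right $R\ltimes_{n}M$-module is of the form $\textbf{H}(Q)$ with $Q$ an injective right $R$-module. Rather than compute $\textbf{H}(Q)\otimes_{R\ltimes_{n}M}\Delta$ directly, we proceed as follows.
\begin{enumerate}
\item Use the right-module version of Lemma \ref{lem: 3.4}: $\textbf{H}(Q)$ has a finite filtration whose factors are $\textbf{Z}(\Hom^{i}_{R}(M,Q))$, $0\leq i\leq n$.
\item $\Delta$ is an exact complex of projectives, so $-\otimes_{R\ltimes_{n}M}\Delta$ turns these short exact sequences into short exact sequences of complexes.
\item For each factor, $\textbf{Z}(N)\otimes_{R\ltimes_{n}M}\textbf{T}(P)\cong N\otimes_{R}P$, so that $\textbf{Z}(N)\otimes_{R\ltimes_{n}M}\Delta\cong N\otimes_{R}\Xi$.
\item Since $fd(_{R}M^{\otimes_{R}i})<\infty$, each $N=\Hom^{i}_{R}(M,Q)\cong\Hom_{R}(M^{\otimes_{R}i},Q)$ has finite injective dimension. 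As $\Xi$ is exact, consists of projectives and is $E\otimes_{R}-$ exact for all injective $E$, it follows that $N\otimes_{R}\Xi$ is exact (by \cite[Lemma 2.3]{EIT}-type dimension shifting).
\item The long exact homology sequence of the filtration (\cite[Theorem 1.4.7]{EJ}) then gives exactness of $\textbf{H}(Q)\otimes_{R\ltimes_{n}M}\Delta$.
\end{enumerate}
Hence $(X,f)$ is $PGF$.
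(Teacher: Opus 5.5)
Your proposal is correct and follows the paper's proof essentially step for step. Both arguments reuse the Horseshoe construction of $\Delta$ from Theorem \ref{thm: 3.2}, with the needed vanishing $\Ext^{1}_{R}(\ker(g^{k}),M^{\otimes_{R}i}\otimes_{R}P^{k})=0$ now coming from the $\Ext$-orthogonality of PGF modules to modules of finite flat dimension; the paper cites \cite[Corollary 1]{I} and \cite[Lemma 3.1]{M0} for this, where you use \cite{SS} plus dimension shifting. Both then filter $\textbf{H}(Q)$ by the $\textbf{Z}(\Hom^{i}_{R}(M,Q))$ via Lemma \ref{lem: 3.4} to get $\textbf{H}(Q)\otimes_{R\ltimes_{n}M}\Delta$ exact. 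The vanishing is needed for every cosyzygy $\ker(g^{k})$, not only $\coker(f)$, but your general statement already covers this.
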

\begin{proof}There is an exact sequence of projective left $R$-modules $$\Xi: \cdots\longrightarrow
P^{-1}\stackrel{g^{-1}}\longrightarrow P^{0}\stackrel{g^{0}}\longrightarrow P^{1}\stackrel{g^{1}}\longrightarrow P^{2}\stackrel{g^{2}}\longrightarrow \cdots$$  such that $\coker(f)\cong\ker(g^{0})$ and  $E\otimes_{R}-$ leaves the sequence exact for any injective right $R$-module $E$.

Let $f_{1}=f$, $f_{i}=f(M\otimes f) \cdots(M^{\otimes_{R}i-1}\otimes f): M^{\otimes_{R}i}\otimes_{R} X\rightarrow X$. By \cite[Lemma 2.7(1)]{M1}, there are the exact sequences   $$0\longrightarrow M\otimes_{R}\coker(f_{i-1})\longrightarrow \coker(f_{i})\longrightarrow \coker(f)\longrightarrow 0.$$

Since $fd(_{R}M^{\otimes_{R}i})<\infty$ for any $1\leq i\leq n$, $fd(_{R}M^{\otimes_{R}i}\otimes_{R}P^{k})<\infty$ for any $k\in\mathbb{N}$. By \cite[Corollary 1]{I} and \cite[Lemma 3.1]{M0},  we have $\Ext^{1}_{R}(\ker(g^{k}), M^{\otimes_{R}i}\otimes_{R}P^{k})=0$.
Since $fd(M^{\otimes_{R}i}_{R})<\infty$ for any $1\leq i\leq n$, $M^{\otimes_{R}i}\otimes_{R}\Xi$ is exact. Using a similar method as in Theorem \ref{thm: 3.2}, we obtain the desired exact sequence of projective left $R\ltimes_{n}M$-modules $$\Delta: \cdots\longrightarrow \textbf{T}(P^{-1})\stackrel{h_{n+1}^{-1}}\longrightarrow
\textbf{T}(P^{0})\stackrel{h_{n+1}^{0}}\longrightarrow \textbf{T}(P^{1})\stackrel{h_{n+1}^{1}}\longrightarrow \textbf{T}(P^{2})\longrightarrow \cdots$$ with $(X,f)\cong\ker(h_{n+1}^{0})$ by  the generalized Horseshoe Lemma.

Next, we show that the sequence $\Delta$ remains exact when tensored with any injective right $R\ltimes_{\Phi}M$-module  $\textbf{H}(Q)$, where $Q$ is an injective right $R$-module. By Lemma \ref{lem: 3.4}, there are exact sequences in Mod-$R\ltimes_{n}M$  $$0\longrightarrow \textbf{Z}(Q)\longrightarrow \textbf{H}(Q)\longrightarrow[\coprod_{i=1}^{n}\Hom_{R}^{i}(M,Q),w_{1}]\longrightarrow 0,$$  $$0\rightarrow\textbf{Z}(\Hom_{R}(M,Q))\rightarrow [\coprod_{i=1}^{n}\Hom_{R}^{i}(M,Q),w_{1}] \rightarrow[\coprod_{i=2}^{n}\Hom_{R}^{i}(M,Q),w_{2}]\rightarrow 0,$$ $$\cdots\cdots$$
 $$0\rightarrow\textbf{Z}(\Hom_{R}^{n-1}(M,Q))\rightarrow[\coprod_{i=n-1}^{n}\Hom_{R}^{i}(M,Q),w_{n-1}]\rightarrow\textbf{Z}(\Hom_{R}^{n}(M,Q))\rightarrow 0,$$
which induce the exact sequences of complexes $$0\longrightarrow \textbf{Z}(Q)\otimes_{R\ltimes_{n}M}\Delta\longrightarrow \textbf{H}(Q)\otimes_{R\ltimes_{n}M}\Delta\longrightarrow[\coprod_{i=1}^{n}\Hom_{R}^{i}(M,Q),w_{1}]\otimes_{R\ltimes_{n}M}\Delta\longrightarrow 0,$$  {\Small $$0\rightarrow\textbf{Z}(\Hom_{R}(M,Q))\otimes_{R\ltimes_{n}M}\Delta\rightarrow [\coprod_{i=1}^{n}\Hom_{R}^{i}(M,Q),w_{1}]\otimes_{R\ltimes_{n}M}\Delta \rightarrow[\coprod_{i=2}^{n}\Hom_{R}^{i}(M,Q),w_{2}]\otimes_{R\ltimes_{n}M}\Delta\rightarrow 0,$$ $$\cdots\cdots$$
 $$0\rightarrow\textbf{Z}(\Hom_{R}^{n-1}(M,Q))\otimes_{R\ltimes_{n}M}\Delta\rightarrow[\coprod_{i=n-1}^{n}\Hom_{R}^{i}(M,Q),w_{n-1}]\otimes_{R\ltimes_{n}M}\Delta
 \rightarrow\textbf{Z}(\Hom_{R}^{n}(M,Q))\otimes_{R\ltimes_{n}M}\Delta\rightarrow 0.$$}
Since $fd(_{R}M^{\otimes_{R}i})<\infty$, we have $id(\Hom_{R}^{i}(M,Q)_{R})<\infty$.  So $\textbf{Z}(\Hom_{R}^{i}(M,Q))\otimes_{R\ltimes_{n}M}\Delta\cong\Hom_{R}^{i}(M,Q)\otimes_{R}\Xi$ is exact for any $0\leq i\leq n$. Thus $\textbf{H}(Q)\otimes_{R\ltimes_{n}M}\Delta$ is exact.

It follows that  $(X,f)$ is a $PGF$ left $R\ltimes_{n}M$-module.
\end{proof}
\begin{cor}\label{cor: 4.6}Assume that  $fd(_{R}M^{\otimes_{R}i})<\infty$ and $fd(M^{\otimes_{R}i}_{R})<\infty$ for any $1\leq i\leq n$, $fd(_{R\ltimes_{n}M}\textbf{Z}(R))<\infty$, $fd(\textbf{Z}(R)_{R\ltimes_{n}M})$ or $id(\textbf{Z}(R)_{R\ltimes_{n}M})<\infty$. Then
\begin{enumerate}\item $\textbf{T}(X)$ is a $PGF$ left $R\ltimes_{n}M$-module if and only if  $X$ is a $PGF$ left $R$-module.\item $\textbf{Z}(X)$ is a $PGF$ left $R\ltimes_{n}M$-module if and only if $M\otimes_R X =0$ and $X$ is a $PGF$ left $R$-module.
\end{enumerate}
\end{cor}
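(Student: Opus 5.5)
The proof follows Corollary \ref{cor: 4.3}, with Theorems \ref{thm: 4.4} and \ref{thm: 4.5} in place of Theorems \ref{thm: 4.1} and \ref{thm: 4.2}. The hypotheses here are exactly the union of those of Theorems \ref{thm: 4.4} and \ref{thm: 4.5}. So the work reduces to identifying, for $\textbf{T}(X)$ and $\textbf{Z}(X)$, the exactness condition on $M^{\otimes_{R}n+1}\otimes_{R}X\rightarrow M\otimes_{R}X\rightarrow X$ and the cokernel of the structure map.

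For (1), write $\textbf{T}(X)=(\coprod_{i=0}^{n}(M^{\otimes_{R}i}\otimes_{R}X),\mu)$.

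For ``$\Rightarrow$'', Theorem \ref{thm: 4.4} shows that $\coker(\mu)$ is $PGF$. Since $\textbf{C}\textbf{T}=id$, we have $\coker(\mu)\cong X$, so $X$ is $PGF$.

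For ``$\Leftarrow$'', I would apply Theorem \ref{thm: 4.5}. This needs the sequence
$$M^{\otimes_{R}n+1}\otimes_{R}\textbf{T}(X)\longrightarrow M\otimes_{R}\textbf{T}(X)\stackrel{\mu}\longrightarrow \textbf{T}(X)$$
to be exact.
\begin{itemize}
\item The map $\mu$ is the shift $(x_{0},\dots,x_{n})\mapsto(0,x_{0},\dots,x_{n-1})$, read on $M\otimes_{R}\textbf{T}(X)=\coprod_{i=1}^{n+1}M^{\otimes_{R}i}\otimes_{R}X$. Its kernel is the summand $M^{\otimes_{R}n+1}\otimes_{R}X$.
\item The composite $(M\otimes\mu)\cdots(M^{\otimes_{R}n}\otimes\mu)$ on $M^{\otimes_{R}n+1}\otimes_{R}\textbf{T}(X)$ is an $n$-fold shift. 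It carries the degree-$0$ summand $M^{\otimes_{R}n+1}\otimes_{R}X$ identically onto that top summand.
\end{itemize}
Hence the image equals the kernel and the sequence is exact; this is \cite[Theorem 4.4]{M1} applied to the projective-like module $\textbf{T}(X)$. I expect this index bookkeeping to be the only mildly delicate point.

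Alternatively, one can argue directly as in Corollary \ref{cor: 4.3}. Apply $\textbf{T}$ to a complete $PGF$ resolution $\Xi$ of $X$ by projectives. Exactness of $\textbf{T}(\Xi)$ follows from $fd(M^{\otimes_{R}i}_{R})<\infty$. Then check that $\textbf{H}(Q)\otimes_{R\ltimes_{n}M}\textbf{T}(\Xi)\cong\coprod_{i}\Hom^{i}_{R}(M,Q)\otimes_{R}\Xi$ is exact: $fd(_{R}M^{\otimes_{R}i})<\infty$ gives $id(\Hom^{i}_{R}(M,Q)_{R})<\infty$, and \cite[Lemma 4.1(1)]{M1} then applies.

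For (2), write $\textbf{Z}(X)=(X,0)$. Here $f=0$, so $\coker(f)=X$, and the relevant sequence is $M^{\otimes_{R}n+1}\otimes_{R}X\rightarrow M\otimes_{R}X\stackrel{0}\rightarrow X$. Its first map is also $0$, since it is a composite of tensors with $0$. So the sequence is exact iff $M\otimes_{R}X=0$. Both directions of (2) then follow at once from Theorems \ref{thm: 4.4} and \ref{thm: 4.5}.
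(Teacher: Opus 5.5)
Your proposal is correct and is essentially the paper's argument. The paper gives Corollary \ref{cor: 4.6} no separate proof because its hypotheses are exactly those of Theorems \ref{thm: 4.4} and \ref{thm: 4.5} combined; both of your routes for (1) ``$\Leftarrow$'' are valid (checking that $\ker(\mu)=M^{\otimes_{R}n+1}\otimes_{R}X$ is hit identically by the $n$-fold shift and then applying Theorem \ref{thm: 4.5}, or tensoring $\textbf{T}(\Xi)$ with $\textbf{H}(Q)$ as in the proof of Corollary \ref{cor: 4.3}), as is your treatment of (2) via $f=0$ (alternatively, $M\otimes_{R}X=0$ forces $\textbf{Z}(X)=\textbf{T}(X)$, which is how Corollary \ref{cor: 4.3}(2) reduces to part (1)).
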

Applying  Theorems \ref{thm: 4.1}, \ref{thm: 4.2}, \ref{thm: 4.4} and \ref{thm: 4.5} to tensor rings, we have
\begin{cor} \label{cor: 4.7}Let $M$ be an $n$-nilpotent  $R$-bimodule and  $(X,f)$ a left $T_{R}(M)$-module.
\begin{enumerate}\item If $fd(_{T_{R}(M)}\textbf{Z}(R))<\infty$, $fd(\textbf{Z}(R)_{T_{R}(M)})$ or $id(\textbf{Z}(R)_{T_{R}(M)})<\infty$ and $(X,f)$ is a  Gorenstein flat left $T_{R}(M)$-module, then $f$ is a monomorphism  and $\coker(f)$ is a Gorenstein flat left $R$-module.\item If  $T_{R}(M)$ is a right coherent ring, $fd(_{R}M^{\otimes_{R}i})<\infty$, $pd(M^{\otimes_{R}i}_{R})<\infty$ for any $1\leq i\leq n$, $f$ is a monomorphism and $\coker(f)$ is a Gorenstein flat left $R$-module, then $(X,f)$ is a Gorenstein flat left $T_{R}(M)$-module.
\end{enumerate}
\end{cor}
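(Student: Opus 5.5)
This is the specialization of Theorems \ref{thm: 4.1} and \ref{thm: 4.2} to the case $M^{\otimes_{R}n+1}=0$, where $R\ltimes_{n}M\cong T_{R}(M)$. The plan is to identify $T_{R}(M)$-modules with objects $(X,f)$ of $\Omega$ through this isomorphism, and then read off both parts.

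The key observation is this. When $M^{\otimes_{R}n+1}=0$, we also have $M^{\otimes_{R}n+1}\otimes_{R}X=0$. So the sequence
$$M^{\otimes_{R}n+1}\otimes_{R} X\longrightarrow M\otimes_{R} X\stackrel{f}\longrightarrow X$$
is exact if and only if $\ker(f)=0$, that is, if and only if $f$ is a monomorphism.

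For (1), I would apply Theorem \ref{thm: 4.1} with $R\ltimes_{n}M$ replaced by $T_{R}(M)$. The hypotheses $fd(_{T_{R}(M)}\textbf{Z}(R))<\infty$ and ($fd$ or $id$ of $\textbf{Z}(R)_{T_{R}(M)}$)$<\infty$ match those of that theorem exactly. The theorem gives exactness of the displayed sequence, hence $f$ is monic, and it also gives that $\coker(f)$ is Gorenstein flat.

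For (2), I would apply Theorem \ref{thm: 4.2}. Right coherence of $T_{R}(M)\cong R\ltimes_{n}M$ is assumed, and so are the dimension conditions $fd(_{R}M^{\otimes_{R}i})<\infty$ and $pd(M^{\otimes_{R}i}_{R})<\infty$. Since $f$ is monic, the displayed sequence is exact by the observation above. The conclusion of Theorem \ref{thm: 4.2} is then exactly that $(X,f)$ is Gorenstein flat over $T_{R}(M)$.

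The only point needing care, and it is minor, is the transport along the ring isomorphism $R\ltimes_{n}M\cong T_{R}(M)$. I have to check that under this isomorphism the functor $\textbf{Z}$ and the description of modules as pairs $(X,f)$ agree with the ones used for $T_{R}(M)$. This holds because the ideal $\coprod_{i\geq n+1}M^{\otimes_{R}i}$ is zero, so the isomorphism is the identity on the graded pieces and all the structures coincide.
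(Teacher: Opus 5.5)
Your proposal is correct and follows the paper's own argument: the paper obtains Corollary \ref{cor: 4.7} by applying Theorems \ref{thm: 4.1} and \ref{thm: 4.2} through the identification $R\ltimes_{n}M\cong T_{R}(M)$ for $n$-nilpotent $M$. As you note, $M^{\otimes_{R}n+1}\otimes_{R}X=0$ makes exactness of the displayed sequence equivalent to $f$ being a monomorphism.
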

\begin{cor} \label{cor: 4.8}Let $M$ be an $n$-nilpotent  $R$-bimodule and  $(X,f)$ a left $T_{R}(M)$-module.
\begin{enumerate}\item  If $fd(_{T_{R}(M)}\textbf{Z}(R))<\infty$, $fd(\textbf{Z}(R)_{T_{R}(M)})$ or $id(\textbf{Z}(R)_{T_{R}(M)})<\infty$  and $(X,f)$ is a  $PGF$ left $T_{R}(M)$-module, then $f$ is a monomorphism  and $\coker(f)$ is a $PGF$ left $R$-module.\item If   $fd(_{R}M^{\otimes_{R}i})<\infty$, $fd(M^{\otimes_{R}i}_{R})<\infty$ for any $1\leq i\leq n$, $f$ is a monomorphism and $\coker(f)$ is a $PGF$ left $R$-module, then $(X,f)$ is a $PGF$ left $T_{R}(M)$-module.
\end{enumerate}
\end{cor}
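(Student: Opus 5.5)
Corollary \ref{cor: 4.8} is the specialization of Theorems \ref{thm: 4.4} and \ref{thm: 4.5} to the case where $M$ is $n$-nilpotent. The plan is to identify $T_{R}(M)$ with $R\ltimes_{n}M$ and show that the exactness condition in those theorems collapses to injectivity of $f$.

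First I would record that $M^{\otimes_{R}n+1}=0$, so the ideal $\coprod_{i=n+1}^{\infty}M^{\otimes_{R}i}$ of $T_{R}(M)$ vanishes. The ring isomorphism $R\ltimes_{n}M\cong T_{R}(M)/\coprod_{i=n+1}^{\infty}M^{\otimes_{R}i}$ noted before Corollary \ref{cor: 3.7} then gives $R\ltimes_{n}M\cong T_{R}(M)$. Under this isomorphism, left $T_{R}(M)$-modules are the objects $(X,f)$ of $\Omega$, and the modules $\textbf{Z}(R)$ over the two rings coincide. Hence every hypothesis stated over $T_{R}(M)$ transfers verbatim to $R\ltimes_{n}M$, and being $PGF$ is invariant under the ring isomorphism.

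Next I would show that the sequence $M^{\otimes_{R}n+1}\otimes_{R} X\longrightarrow M\otimes_{R}X\stackrel{f}\longrightarrow X$ is exact if and only if $f$ is a monomorphism. Since $M^{\otimes_{R}n+1}=0$, we have $M^{\otimes_{R}n+1}\otimes_{R}X\cong M^{\otimes_{R}n+1}\otimes_{R}X$ with the left factor zero, so this module is $0$. Exactness at $M\otimes_{R}X$ therefore means exactly $\ker(f)=0$.

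For part (1), I would apply Theorem \ref{thm: 4.4} to the $PGF$ module $(X,f)$; it yields the exact sequence, hence $f$ is monic by the previous step, and it also yields that $\coker(f)$ is $PGF$. For part (2), the monomorphism hypothesis gives the exact sequence, so Theorem \ref{thm: 4.5} applies and shows $(X,f)$ is $PGF$.

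The only point needing care is the identification of the $\textbf{Z}(R)$-hypotheses across the isomorphism. This is routine, because $\textbf{Z}(R)$ is $R$ with $M$ acting by zero, i.e. the quotient of the ring by its augmentation ideal $\coprod_{i\geq1}M^{\otimes_{R}i}$ in both descriptions. So I expect no genuine obstacle.
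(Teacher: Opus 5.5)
Your proposal is correct and follows the paper's own route: the paper obtains this corollary simply by applying Theorems \ref{thm: 4.4} and \ref{thm: 4.5} to $T_{R}(M)\cong R\ltimes_{n}M$ for $n$-nilpotent $M$. Your extra observation that $M^{\otimes_{R}n+1}\otimes_{R}X=0$, so exactness of the sequence is equivalent to $f$ being a monomorphism, is exactly the (implicit) step needed to translate those theorems into the stated form.
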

\begin{rem} \label{rem: 4.9} {\rm We mention that  Corollary \ref{cor: 4.7} generalizes \cite[Lemma 3.7 and Theorem 3.8]{DLST},
Corollary \ref{cor: 4.8} generalizes \cite[Lemmas 2.4 and 2.5]{TW}.}
\end{rem}

\bigskip
\centerline {\bf ACKNOWLEDGEMENTS}
\bigskip
This research was supported by NSFC (12271249).

\end{document}